\newtheorem{thm}{Theorem}
\newtheorem{prop}{Proposition}
\newtheorem{lem}{Lemma}
\newtheorem{exam}{Example}
\newtheorem{cor}{Corollary}
\newtheorem{definition}{Definition}
\numberwithin{equation}{section}
\newcommand{\abs}[1]{\lvert#1\rvert}
\newcommand{\norm}[1]{\lVert#1\rVert}
\newcommand{\oc}{\xrightarrow{\mathit{o}}}	
\newcommand{\ru}{\xrightarrow{\mathit{ru}}}	
\newcommand{\unc}{\xrightarrow{\mathit{un}}} 
\newcommand{\tc}{\xrightarrow{\tau}}	
\newcommand{\dc}{\xrightarrow{\mathit{d}}} 
\newcommand{\utc}{\xrightarrow{\mathit{u}\tau}} 
\newcommand{\thc}{\xrightarrow{\mathit{\hat{\tau}}}} 
\newcommand{\uoc}{\xrightarrow{\mathit{uo}}}	
\newcommand{\nc}{\xrightarrow{\norm{\cdot}}}	
\newcommand{\wc}{\xrightarrow{\mathit{w}}}	
\DeclareMathAlphabet{\mathpzc}{OT1}{pzc}{m}{it}
\DeclareSymbolFont{bbold}{U}{bbold}{m}{n}
\DeclareSymbolFontAlphabet{\mathbbold}{bbold}
\def\one{\mathbbold{1}}
\renewcommand{\le}{\leqslant}
\renewcommand{\ge}{\geqslant}
\begin{document}
	
\title{AMS Journal Sample}
	
	\author{Y. A. Dabboorasad$^{1,2}$}
	\address{$1$ Department of Mathematics, Islamic University of Gaza, P.O.Box 108, Gaza City, Palestine.}
	\email{yasad@iugaza.edu.ps, ysf\_atef@hotmail.com}

	\author{E. Y. Emelyanov$^2$}
	\author{M. A. A. Marabeh$^2$}
	\address{$^2$ Department of Mathematics, Middle East Technical University,  06800 Ankara, Turkey.}
	\email{yousef.dabboorasad@metu.edu.tr, eduard@metu.edu.tr, mohammad.marabeh@metu.edu.tr, m.maraabeh@gmail.com}
	
	\keywords{Banach lattice,
		vector lattice,
		$u\tau$-convergence,
		$u\tau$-topology,
		$uo$-convergence,
		$un$-convergence,
		$un$-topology}
	
	\subjclass[2010]{Primary:46A16, 46A40. Secondary: 32F45}
	
	\date{\today}
	
	\title{$u\tau$-Convergence in locally solid vector lattices}
	
\begin{abstract}
Let $x_\alpha$ be a net in a locally solid vector lattice $(X,\tau)$; we say that $x_\alpha$ is unbounded $\tau$-convergent to a vector $x\in X$ 
if $\lvert x_\alpha-x\rvert\wedge w\tc 0$ for all $w\in X_+$. In this paper, we study general properties of unbounded $\tau$-convergence (shortly, $u\tau$-convergence).
$u\tau$-Convergence generalizes unbounded norm convergence and unbounded absolute weak convergence in normed lattices 
that have been investigated recently. Besides, we introduce $u\tau$-topology and study briefly metrizabililty and completeness of this topology. 	
\end{abstract}
\maketitle

\section{Introduction and preliminaries}

The subject of ``unbounded convergence" has attracted many researchers \cite{Wick77,Tr04,Gao:14,GTX,EM,DOT,Zab,KMT,AEEM1,LC,KLT,GLX,Tay17}. 
It is well-investigated in vector lattices and normed lattices \cite{Gao:14,GaoX:14,GTX, Zab}. In the present paper, we study unbounded convergence 
in locally solid vector lattices. Results in this article extend previous works \cite{DOT,GTX,KMT,Zab}.

For a net $x_\alpha$ in a vector lattice $X$, we write
$x_\alpha\oc x$, if $x_\alpha$ {\em converges} to $x$ {\em in order}. 
This means that there is a net $y_\beta$, possibly over a
different index set, such that $y_\beta\downarrow 0$ and, for every
$\beta$, there exists $\alpha_\beta$ satisfying
$\abs{x_\alpha-x}\le y_\beta$ whenever $\alpha\ge\alpha_\beta$. 
A net $x_\alpha$ is {\em unbounded order convergent} to a vector 
$x\in X$ if $\abs{x_\alpha-x}\wedge u\oc 0$ for every $u\in X_+$. 
We write $x_\alpha\uoc x$ and say that $x_\alpha$ {\em uo-converges} to $x$. 
Clearly, order convergence implies $uo$-convergence and they coincide for order bounded nets. 
For a measure space $(\Omega,\Sigma,\mu)$ and for a sequence $f_n$ in $L_p(\mu)$ ($0\leq p\leq\infty$), $f_n\uoc 0$ iff $f_n\to 0$ almost everywhere 
(cf. \cite[Rem. 3.4]{GTX}). It is well known that almost everywhere convergence is not topological in general \cite{Ord66}. 
Therefore, the $uo$-convergence might not be topological. Quite recently, it has been shown that order convergence is never 
topological in infinite dimensional vector lattices \cite{DEM1}.

For a net $x_\alpha$ in a normed lattice $(X,\lVert\cdot\rVert)$, we write $x_\alpha\nc x$ if $x_\alpha$ converges to $x$ in norm. 
We say that $x_\alpha$ {\em unbounded norm converges} to $x\in X$ (or $x_\alpha$ {\em un-converges} to $x$) if $\abs{x_\alpha-x}\wedge u\nc 0$
for every $u\in X_+$. We write $x_\alpha\unc x$. Clearly, norm convergence implies $un$-convergence. The $un$-convergence is topological, 
and the corresponding topology (which is known as {\em un-topology}) was investigated in \cite{KMT}. 
A net $x_\alpha$ is {\em unbounded absolute weak convergent} to $x\in X$ (or $x_\alpha$ {\em $uaw$-converges} to $x$)  
if $\abs{x_\alpha-x}\wedge u\wc 0$ for all $u\in X_+$, where ``$\mathit{w}$" refers the weak convergence. 
We write $x_\alpha\xrightarrow{\mathit{uaw}}x$. Absolute weak convergence implies $uaw$-convergence. 
The notions of $\mathit{uaw}$-convergence and $uaw$-topology were introduced in \cite{Zab}.

If $X$ is a vector lattice, and $\tau$ is a linear topology on $X$ that has a base at zero consisting of solid sets, 
then the pair $(X,\tau)$ is called a {\em locally solid vector lattice}. It should be noted that all topologies considered throughout this article are assumed to be Hausdorff. 
It follows from \cite[Thm. 2.28]{Aliprantis:03} that a linear topology $\tau$ on a vector lattice $X$ is locally solid iff it is generated by a family $\{\rho_j\}_{j\in J}$ of Riesz pseudonorms. 
Moreover, if a family of Riesz pseudonorms generates a locally solid topology $\tau$ on a vector lattice $X$, then $x_\alpha\tc x$ in $X$ iff $\rho_j(x_\alpha-x)\xrightarrow[\alpha]{}0$ 
in $\mathbb{R}$ for each $j\in J$. Since $X$ is Hausdorff, then the family $\{\rho_j\}_{j\in J}$ of Riesz pseudonorms is separating; i.e., if $\rho_j(x)=0$ for all $j\in J$, then $x=0$. 
In this article, unless otherwise, the pair $(X,\tau)$ refers to as a locally solid vector lattice. 

A subset $A$ in a topological vector space $(X,\tau)$ is called {\em topologically bounded} (or simply $\tau$-{\em bounded}) 
if, for every $\tau$-neighborhood $V$ of zero, there exists some $\lambda>0$ such that $A\subseteq\lambda V$. 
If $\rho$ is a Riesz pseudonorm on a vector lattice $X$ and $x\in X$, then $\frac{1}{n}\rho(x)\leq\rho(\frac{1}{n}x)$ for all $n\in\mathbb{N}$. 
Indeed, if $n\in\mathbb{N}$ then $\rho(x)=\rho(n\frac{1}{n}x)\leq n\rho(\frac{1}{n}x)$. The following standard fact is included for the sake of completeness.

\begin{prop}
Let $(X,\tau)$ be a locally solid vector lattice with a family of a Riesz pseudonorms $\{\rho_j\}_{j\in J}$ that generates the topology $\tau$. 
If a subset $A$ of $X$ is $\tau$-bounded then $\rho_j(A)$ is bounded in $\mathbb{R}$ for any $j\in J$.
\end{prop}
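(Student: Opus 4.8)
The plan is to test the $\tau$-boundedness of $A$ against the special $\tau$-neighborhoods of zero that come from a single pseudonorm, and then convert the resulting scalar estimate into a bound on $\rho_j(A)$ by means of the inequality $\frac1n\rho_j(x)\le\rho_j(\frac1n x)$ recalled just before the statement.

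Fix $j\in J$. First I would observe that $V:=\{x\in X:\rho_j(x)<1\}$ is a $\tau$-neighborhood of zero: since $\tau$ is generated by the family $\{\rho_j\}_{j\in J}$, the sets of this form (and their finite intersections) constitute a base at zero for $\tau$. Moreover $V$ is balanced, since $\rho_j(\lambda x)\le\rho_j(x)$ whenever $\abs{\lambda}\le 1$; consequently $\mu V\subseteq\nu V$ whenever $0<\mu\le\nu$.

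Now, since $A$ is $\tau$-bounded, there is some $\lambda>0$ with $A\subseteq\lambda V$. Choose $n\in\mathbb{N}$ with $n\ge\lambda$; then $A\subseteq\lambda V\subseteq nV$ by the preceding remark. Hence, for every $a\in A$ we have $\frac1n a\in V$, i.e.\ $\rho_j(\frac1n a)<1$, and therefore, by the inequality recalled above, $\frac1n\rho_j(a)\le\rho_j(\frac1n a)<1$, so $\rho_j(a)<n$. Thus $\rho_j(A)\subseteq[0,n)$ is bounded in $\mathbb{R}$, which is what we wanted.

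I do not expect any real obstacle here; the only two points that need a little care are the verification that $\{x:\rho_j(x)<1\}$ genuinely lies in $\tau$ (which is precisely the description of $\tau$ in terms of its generating pseudonorms, cited from \cite[Thm.~2.28]{Aliprantis:03}) and the passage from an arbitrary scaling factor $\lambda$ to an integer $n\ge\lambda$, for which the balancedness of $V$ is used.
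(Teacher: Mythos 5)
Your proposal is correct and follows essentially the same route as the paper: test $\tau$-boundedness against $V=\{x:\rho_j(x)<1\}$, replace the scaling factor $\lambda$ by an integer $n\ge\lambda$ using the monotonicity of the Riesz pseudonorm under scaling down, and conclude via $\frac1n\rho_j(a)\le\rho_j(\frac1n a)<1$. The only cosmetic difference is that you phrase the passage from $\lambda$ to $n$ as the inclusion $\lambda V\subseteq nV$ (balancedness of $V$), whereas the paper writes the equivalent pointwise inequality $\rho_j(\frac1n a)\le\rho_j(\frac1\lambda a)\le 1$.
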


\begin{proof}
Let $A\subseteq X$ be $\tau$-bounded and $j\in J$. Put $V:=\{x\in X:\rho_j(x)<1\}$. Clearly, $V$ is a neighborhood of zero in $X$. 
Since $A$ is $\tau$-bounded, there is $\lambda>0$ satisfying $A\subseteq\lambda V$. Thus $\rho_j(\frac{1}{\lambda}a)\leq1$ for all $a\in A$. 
There exists $n\in\mathbb{N}$ with $n>\lambda $. Now, $\frac{1}{n}\rho_j(a)\leq\rho_j(\frac{1}{n}a)\leq\rho_j(\frac{1}{\lambda}a)\leq1$ for all $a\in A$. 
Hence, $\sup_{a\in A}\rho_j(a)\leq n<\infty$.
\end{proof}

Next, we discuss the converse of the proposition above. 

Let $\{\rho_j\}_{j\in J}$ be a family of Riesz pseudonorms for a locally solid vector lattice $(X,\tau)$. 
For $j\in J$, let $\tilde{\rho}_j:=\frac{\rho_j}{1+\rho_j}$. Then $\tilde{\rho}_j$ is a Riesz pseudonorm on $X$. 
Moreover, the family $(\tilde{\rho}_j)_{j\in J}$ generates the topology $\tau$ on $X$. 
Clearly, $\tilde{\rho}_j(A)\leq 1$ for any subset $A$ of $X$, but still we might have a subset that is not $\tau$-bounded.

Recall that a locally solid vector lattice $(X,\tau)$ is said to have the {\em Lebesgue property} if $x_\alpha\downarrow 0$ in $X$ 
implies $x_\alpha\tc 0$; or equivalently $x_\alpha\oc 0$ implies $x_\alpha\tc 0$; and $(X,\tau)$ is said to have the {\em $\sigma$-Lebesgue property} 
if $x_n\downarrow 0$ in $X$ implies $x_n\tc 0$. Finally, $(X,\tau)$ is said to have the {\em Levi property} 
if $0\leq x_\alpha\uparrow$ and the net $x_\alpha$ is $\tau$-bounded, then $x_\alpha$ has the supremum in $X$; 
and $(X,\tau)$ is said to have the {\em $\sigma$-Levi property} if $0\leq x_n\uparrow$ and $x_n$ is $\tau$-bounded, then $x_n$ has supremum in $X$, see \cite[Def. 3.16]{Aliprantis:03}.

Let $X$ be a  vector lattice, and take  $0\neq u\in X_+$. Then a net $x_\alpha$ in $X$ is said to be {\em u-uniformly convergent} to a vector $x\in X$ if, for each $\varepsilon>0$, 
there exists some $\alpha_{\varepsilon}$ such that $\abs{x_\alpha-x}\leq\varepsilon u$ holds for all $\alpha\ge\alpha_{\varepsilon}$; and $x_\alpha$ is said to be  
{\em u-uniformly Cauchy} if, for each $\varepsilon > 0 $, there exists some $\alpha_{\varepsilon}$ such that, for all $\alpha,\alpha^\prime\ge\alpha_{\varepsilon}$,  
we have $\abs{x_\alpha-x_{\alpha^\prime}}\leq\varepsilon u$. A vector lattice $X$ is said to be {\em u-uniformly complete} 
if every $u$-uniformly Cauchy sequence in $X$ is $u$-uniformly convergent; and $X$ is said to be {\em uniformly complete} 
if $X$ is $u$-uniformly complete for each $0\neq u\in X_+$.

Let $X$ be a vector lattice. An element $0\neq e\in X_+$ is called a {\em strong unit} if $I_e=X$ (equivalently, 
for every $x\ge 0$, there exists $n\in\mathbb N$ 
such that $x\le ne$), and $0\neq e\in X_+$ is called a {\em weak unit} if $B_e=X$
(equivalently, $x\wedge ne\uparrow x$ for every $x\in X_+$). Here $B_e$ denotes the band generated by $e$. If $(X,\tau)$ is a topological vector lattice, 
then $0\neq e\in X_+$ is called a {\em quasi-interior point}, if the principal ideal $I_e$ is $\tau$-dense in $X$
 \cite[Def. II.6.1]{Schaefer:74}.
It is known that
\begin{displaymath}
\text{strong unit}\Rightarrow
\text{quasi-interior point}\Rightarrow
\text{weak unit}.
\end{displaymath}
Recall that a Banach lattice $X$ is called an {\em $AM$-space} if $\norm{x\vee y}=\max\{\norm{x},\norm{y}\}$ for all $x,y\in X$ with $x\wedge y=0$. 

Let $(X,\tau)$ be a sequentially complete locally solid vector lattice. Then it follows from the proof of \cite[Cor. 2.59]{Aliprantis_Tourky:07} that it is uniformly complete. 
So, for each $0\neq u\in X_+$, let $I_u$ be the ideal generated by $u$ and $\norm{\cdot}_u$ be the norm on $I_u$ given by  
$$
  \norm{x}_u=\inf\{r>0:\abs{x}\leq ru\}~~~~(x \in X).
$$ 

Then, by \cite[Thm. 2.58]{Aliprantis_Tourky:07}, the pair $(I_u,\norm{.}_u)$ is a Banach lattice. Now Theorem 3.4 in \cite{Abramovich:02} 
implies that $(I_u,\norm{\cdot}_u)$ is an $AM$-space with a strong unit $u$, and then, by \cite[Thm. 3.6]{Abramovich:02}, 
it is lattice isometric (uniquely, up to a homeomorphism) to $C(K)$ for some compact Hausdorff space $K$ in such a way, 
that the strong unit $u$ is identified with the constant function $\one$ on $K$.

For unexplained terminologies and notions we refer to \cite{Aliprantis:03,Aliprantis:06}.

\section{Unbounded $\tau$-convergence}

Suppose $(X,\tau)$ is a locally solid vector lattice. Let $x_\alpha$ be a net in $X$. 
We say that $x_\alpha$ is unbounded $\tau$-convergent to $x\in X$ if, for any $w\in X_+$, we have $\abs{x_\alpha-x}\wedge w\tc 0$.   
In this case, we write $x_\alpha\utc x$ and say that $x_\alpha$  {\em $u\tau$-converges to} $x$. Obviously, if $x_\alpha\tc x$ then $x_\alpha\utc x$. The converse holds if the net $x_\alpha$ is order bounded.
Note also that $u\tau$-convergence respects linear and lattice operations. It is clear that $u \tau$-convergence is a generalization of $un$-convergence \cite{DOT,KMT} and, of $uaw$-convergence \cite{Zab}.

Let $\mathcal{N}_\tau$ be a neighborhood base at zero consisting of solid sets for $(X,\tau)$. For each $0\neq w\in X_+$ and $V\in\mathcal{N}_\tau$, let
$$
  U_{V,w}\coloneqq\{x\in X:\abs{x}\wedge w\in V\}.
$$
It can be easily shown that the collection 
$$
  \mathcal{N}_{u\tau}\coloneqq\{U_{V,w}:V\in\mathcal{N}_\tau, 0\neq w\in X_+\}
$$
forms a neighborhood base at zero for a locally solid topology; we call it $u\tau$-topology, where $u$ refers to as \textit{unbounded}. 
Moreover, $x_\alpha\xrightarrow{\mathit{{u\tau}}}0$ iff $x_\alpha\rightarrow 0$ with respect to $u\tau$-topology. 
Indeed, suppose $x_\alpha\xrightarrow{\mathit{{u\tau}}}0$. Given a neighborhood $U_{V,w}\in\mathcal{N}_{u\tau}$. 
Then there are $0\neq w\in X_+$ and $V\in\mathcal{N}_\tau$ such that
$$
  U_{V,w}=\{x\in X:\abs{x}\wedge w\in V\}.
$$
Now, $x_\alpha\xrightarrow{\mathit{{u\tau}}}0$ implies $\abs{x_\alpha}\wedge w\tc 0$. 
So, there is $\alpha_0$ such that, for all $\alpha\geq\alpha_0$, we have $\abs{x_\alpha}\wedge w\in V$. 
That is $x_\alpha\in U_{V,w}$ for all $\alpha\geq\alpha_0$. Thus, $x_\alpha\rightarrow 0$ in the $u\tau$-topology.

Conversely, assume $x_\alpha\to 0$ in the $u\tau$-topology. Given $0\neq w\in X_+$ and $V\in\mathcal{N_\tau}$. 
Then, $U_{V,w}$ is a zero neighborhood in the $u\tau$-topology. So, there is $\alpha'$ such that $x_\alpha\in U_{V,w}$ for all $\alpha\geq\alpha'$. 
That is, $\abs{x_\alpha}\wedge w\in V$ for all $\alpha\geq\alpha'$. Thus, $\abs{x_\alpha}\wedge w\tc 0$ or $x_\alpha\xrightarrow{\mathit{{u\tau}}}0$.
The locally solid $u\tau$-topology will be referred to as {\em unbounded $\tau$-topology}.

The neighborhood base at zero for the $u\tau$-topology on $X$ has an equivalent representation in terms of 
a family $(\rho_j)_{j\in J}$ of Riesz pseudonorms that generates the topology $\tau$. For $\varepsilon>0$,  $j\in J$, and $0\neq w\in X_+$, let
$V_{\varepsilon,w,j}\coloneqq \{x\in X:\rho_j(\abs{x}\wedge w)<\varepsilon\}$. 
Clearly, the collection $\{V_{\varepsilon,w,j}:\varepsilon>0, 0\neq w\in X_+,j\in J\}$ generates the $u\tau$-topology.

It is known that the topology of any linear topological space can be derived from a unique translation-invariant uniformity, 
i.e., any linear topological space is uniformisable (cf. \cite[Thm. 1.4]{Schaefer:99}). 
It follows from \cite[Thm. 8.1.20]{Engelking:89} that any linear topological space is completely regular. 
In particular, the unbounded $\tau$-convergence is completely regular.

Since $x_\alpha\tc 0$ implies $x_\alpha\utc 0$, then the $\tau$-topology in general is finer than $u\tau$-topology. 
The next result should be compared with \cite[Lm. 2.1]{KMT}.

\begin{lem}\label{dichotomy}
Let $(X,\tau)$ be a sequentially complete locally solid  vector lattice, where $\tau$ is generated by a family $(\rho_j)_{j\in J}$ of Riesz pseudonorms. 
Let $\varepsilon>0$, $j\in J$, and $0\ne w\in X_+$. Then either $V_{\varepsilon,w,j}$ is contained in $[-w,w]$, or it contains a non-trivial ideal.
\end{lem}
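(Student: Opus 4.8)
The plan is to prove the dichotomy directly: assuming $V_{\varepsilon,w,j}\not\subseteq[-w,w]$, I would exhibit a non-trivial ideal inside $V_{\varepsilon,w,j}$. So first fix $x\in V_{\varepsilon,w,j}$ with $\abs{x}\not\le w$; then $(\abs{x}-w)^+\ne 0$, and I set $q:=(\abs{x}-w)^+\wedge w$. There are two cases. If $q=0$, then $(\abs{x}-w)^+$ is a non-zero vector disjoint from $w$, so the band $\{w\}^d=\{y\in X:\abs{y}\wedge w=0\}$ is non-trivial; moreover it is contained in $V_{\varepsilon,w,j}$, since every $y\in\{w\}^d$ satisfies $\rho_j(\abs{y}\wedge w)=\rho_j(0)=0<\varepsilon$. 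This settles that case, so from now on I assume $q\ne 0$.

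In this case I claim that the principal ideal $I_q$ — which is non-trivial because $q\ne 0$ — is contained in $V_{\varepsilon,w,j}$, which will finish the proof. The whole matter reduces to the single inequality
$$ n\,q\wedge w\le\abs{x}\wedge w\qquad\text{for every }n\in\mathbb{N}. $$
Indeed, granting it, any $z\in I_q$ satisfies $\abs{z}\le nq$ for some $n$, whence $\abs{z}\wedge w\le nq\wedge w\le\abs{x}\wedge w$, and monotonicity of the Riesz pseudonorm $\rho_j$ gives $\rho_j(\abs{z}\wedge w)\le\rho_j(\abs{x}\wedge w)<\varepsilon$, i.e. $z\in V_{\varepsilon,w,j}$.

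To establish the displayed inequality I would invoke the machinery recalled in Section~1: since $(X,\tau)$ is sequentially complete it is uniformly complete, so the ideal $I_{\abs{x}+w}$ (which contains $\abs{x}$, $w$ and $q$), equipped with $\norm{\cdot}_{\abs{x}+w}$, is an $AM$-space with strong unit $\abs{x}+w$, hence lattice isometric to some $C(K)$ with $\abs{x}+w$ identified with $\one$. Under this identification $\abs{x}$, $w$, $q$ become continuous functions $\hat a,\hat w,\hat q\ge 0$ on $K$ with $\hat q=(\hat a-\hat w)^+\wedge\hat w$, and since lattice operations in $C(K)$ are pointwise, the inequality becomes the elementary observation that at each $k\in K$ either $\hat a(k)\le\hat w(k)$, in which case $\hat q(k)=0$ and the left side vanishes, or $\hat a(k)>\hat w(k)$, in which case $(\hat a\wedge\hat w)(k)=\hat w(k)$ dominates the left side (which is $\le\hat w(k)$). (One can also argue without the representation: writing $v:=\abs{x}\wedge w$, so $(\abs{x}-w)^+=\abs{x}-v$ and $q=(\abs{x}-v)\wedge w$, one checks $nq\wedge w\le n(\abs{x}-v)\wedge w$ and then $\bigl(n(\abs{x}-v)\wedge w-v\bigr)^+\le (w-v)\wedge n(\abs{x}-v)=(w-\abs{x})^+\wedge n(\abs{x}-w)^+=0$, the last equality because $(w-\abs{x})^+\perp(\abs{x}-w)^+$ and disjointness is preserved under positive scaling; hence $n(\abs{x}-v)\wedge w\le v$.)

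The remaining points are routine — monotonicity of $\rho_j$, and the fact that $\{w\}^d$ and $I_q$ are ideals — so the only real obstacle is the inequality $n\,q\wedge w\le\abs{x}\wedge w$, which is exactly where the argument lives; once it is in hand the dichotomy, which should be compared with \cite[Lm.~2.1]{KMT}, follows at once.
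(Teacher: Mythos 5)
Your proof is correct, and its main line is essentially the paper's: pick $x\in V_{\varepsilon,w,j}$ with $\abs{x}\not\le w$, pass to a principal ideal represented as $C(K)$, verify pointwise a scaling inequality against $\abs{x}\wedge w$, and conclude by solidity of $V_{\varepsilon,w,j}$. Two small differences are worth noting. First, your case split on $q=(\abs{x}-w)^+\wedge w$ is unnecessary: the paper works directly with $y=(\abs{x}-w)^+$ and proves $\alpha y\wedge w\le\abs{x}\wedge w$ for all $\alpha\ge 0$, which holds trivially when $y\wedge w=0$, so the ideal $I_y$ serves in both of your cases at once. Second, your parenthetical representation-free verification of the key inequality (via $v=\abs{x}\wedge w$, the identity $(a\wedge b-c)^+=(a-c)^+\wedge(b-c)^+$, and the disjointness of $(w-\abs{x})^+$ and $(\abs{x}-w)^+$) is a genuinely different and better route: it uses no uniform completeness and hence no sequential completeness, so it proves the dichotomy for arbitrary locally solid vector lattices directly — a conclusion the paper only reaches in the subsequent theorem by passing to the topological completion. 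If you promote that aside to the main argument, the hypothesis of sequential completeness can simply be dropped from the statement.
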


\begin{proof}
Suppose that  $V_{\varepsilon,w,j}$ is not contained in $[-w,w]$. Then there exists $x\in V_{\varepsilon,w,j}$ such that
$x\not\in[-w,w]$. Replacing $x$ with $\abs{x}$, we may assume $x>0$. Since $x\not\in[-w,w]$, then  $y=(x-w)^+>0$. 
Now, letting $z=x\vee w$, we have that the ideal $I_z$ generated by $z$, is lattice and norm isomorphic to $C(K)$ for some compact and Hausdorff space $K$, 
where $z$ corresponds to the constant function $\mathbbm{1}$. Also $x$, $y$, and $w$ in $I_z$ correspond to $x(t)$, $y(t)$, and $w(t)$ in $C(K)$ respectively.
	
Our aim is to show that for all $\alpha\geq  0$ and $t\in K$, we have
$$
  (\alpha y)(t)\wedge w(t)\leq x(t)\wedge w(t).
$$
For this, note that $y(t)=(x-w)^+(t)=(x-w)(t)\vee 0$.
	
Let $t\in K$ be arbitrary.
\begin{itemize}
		\item Case (1): If $(x-w)(t)>0$, then $x(t)\wedge w(t)=w(t)\geq(\alpha y)(t)\wedge w(t)$ for all $\alpha\geq 0 $, as desired.
		\item Case (2): If $(x-w)(t)<0$, then $(\alpha y)(t)\wedge w(t)\leq(\alpha y)(t)=\alpha(x-w)(t)\vee 0=0\leq x(t)\wedge w(t)$, as desired.
\end{itemize}
Hence, for all $\alpha\geq 0$ and $t\in K$, we have $(\alpha w)(t)\wedge w(t)\leq x(t)\wedge w(t)$ and so $(\alpha y)\wedge w\leq x\wedge w$ for all $\alpha\geq 0$ . 
Note, that  $\alpha y, w, x\in X_+$. Thus $\rho_j(\left|\alpha y\right|\wedge w)\leq\rho_j(\left|x\right|\wedge w)<\varepsilon$, so $\alpha y\in V_{\varepsilon,w,j}$ 
and, since $V_{\varepsilon,w,j}$ is solid, then $I_z\subseteq V_{\varepsilon,w,j}$.
\end{proof}

Note that the sequential completeness in Lemma \ref{dichotomy} can be removed, as we see in the following corollary.

\begin{thm}
	Let $(X,\tau)$ be a locally solid  vector lattice, where $\tau$ is generated by a family $(\rho_j)_{j\in J}$ of Riesz pseudonorms. Let $\varepsilon>0$, $j\in J$, and $0\ne w\in X_+$. 
	Then either $V_{\varepsilon,w,j}$ is contained in $[-w,w]$ or $V_{\varepsilon,w,j}$ contains a non-trivial ideal.
\end{thm}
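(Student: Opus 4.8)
The plan is to run the argument of Lemma~\ref{dichotomy} essentially unchanged and to isolate the single spot where sequential completeness was used. In that proof, completeness served only to realize the principal ideal $I_z$ (with $z=x\vee w$) as some $C(K)$ and thereby to justify, through a pointwise computation on $K$, the inequality $(\alpha y)\wedge w\le x\wedge w$ for all $\alpha\ge 0$, where $x\in V_{\varepsilon,w,j}$ and $y=(x-w)^+$. So the real task is to prove this one inequality in an arbitrary vector lattice; everything else carries over verbatim.

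I would begin exactly as in Lemma~\ref{dichotomy}: if $V_{\varepsilon,w,j}\subseteq[-w,w]$ there is nothing to prove, so assume some $x\in V_{\varepsilon,w,j}$ lies outside $[-w,w]$; since both sets are solid we may replace $x$ by $\abs{x}$ and assume $x\ge 0$, and then $x\not\le w$, so $y:=(x-w)^+>0$. For the inequality, the useful identities are $y=x\vee w-w$ and $x\wedge w+x\vee w=x+w$, which give $x\wedge w=x-y$ and $w+y=x\vee w$. Adding $y$ to both sides, $(\alpha y)\wedge w\le x\wedge w$ becomes $\bigl((\alpha+1)y\bigr)\wedge(x\vee w)\le x$, and subtracting $x$ this becomes $\bigl((\alpha+1)y-x\bigr)\wedge(w-x)^+\le 0$. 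Since $(w-x)^+\ge 0$ and $-x\le 0$, the left-hand side is $\le\bigl((\alpha+1)y\bigr)\wedge(w-x)^+$; and $(w-x)^+=(x-w)^-$ is disjoint from $y=(x-w)^+$, hence from $(\alpha+1)y$, so this last meet is $0$. This establishes $(\alpha y)\wedge w\le x\wedge w$ with no completeness hypothesis.

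The proof then closes as in Lemma~\ref{dichotomy}: monotonicity of the Riesz pseudonorm $\rho_j$ on $X_+$ together with $0\le(\alpha y)\wedge w\le x\wedge w$ gives $\rho_j(\abs{\alpha y}\wedge w)\le\rho_j(\abs{x}\wedge w)<\varepsilon$, so $\alpha y\in V_{\varepsilon,w,j}$ for every $\alpha\ge 0$; since $V_{\varepsilon,w,j}$ is solid, the principal ideal $I_y$ is contained in $V_{\varepsilon,w,j}$, and it is non-trivial because $y>0$.

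The only genuine obstacle is recognizing that the $C(K)$-representation used in Lemma~\ref{dichotomy} is a convenience rather than a necessity: once one notices that the disjointness $(x-w)^+\wedge(w-x)^+=0$ is what actually drives the estimate, the completeness assumption evaporates. An alternative, heavier route would be to pass to the topological completion $(\widehat X,\widehat\tau)$ — which is again a locally solid vector lattice and is complete — apply Lemma~\ref{dichotomy} to $\widehat V_{\varepsilon,w,j}$ there, and then intersect with $X$, using that $X$ sits in $\widehat X$ as a sublattice so that the relevant order interval and the witnessing ideal restrict correctly; the direct computation above is cleaner and self-contained.
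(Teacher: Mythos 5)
Your proof is correct, and it takes a genuinely different route from the paper's. The paper keeps Lemma~\ref{dichotomy} as stated and removes sequential completeness by passing to the topological completion $(\hat{X},\hat{\tau})$: it applies the lemma to $\hat{V}_{\varepsilon,w,j}$, uses $V_{\varepsilon,w,j}=X\cap\hat{V}_{\varepsilon,w,j}$, and in the non-trivial case uses $\hat{\tau}$-closedness of $[-w,w]_{\hat{X}}$ together with density of $X$ to find a positive $x\in X\cap\hat{V}_{\varepsilon,w,j}$ outside $[-w,w]$, from which $y=(x-w)^+\in X_+$ generates the required ideal of $X$ --- this is exactly the ``heavier route'' you sketch at the end. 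Your main argument instead attacks the one place where completeness entered, namely the inequality $(\alpha y)\wedge w\le x\wedge w$, and proves it by pure lattice identities: $y=x\vee w-w$, $x\wedge w=x-y$, translation-invariance of $\wedge$ reducing the claim to $\bigl((\alpha+1)y-x\bigr)\wedge(w-x)^+\le 0$, and the disjointness $(x-w)^+\wedge(x-w)^-=0$ (which survives positive scaling). I checked these steps and they are all valid in an arbitrary vector lattice. What your approach buys is twofold: it shows the $C(K)$-representation in Lemma~\ref{dichotomy} was a convenience and that the sequential completeness hypothesis there is simply unnecessary, and it avoids the bookkeeping of the completion argument (density, closedness of the order interval, and transporting the ideal back down to $X$). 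What the paper's approach buys is that it reuses the already-established lemma verbatim and illustrates a transfer principle via $\hat{X}$ that is used elsewhere in the article (e.g., for quasi-interior points). Both are complete proofs; yours is the more self-contained and arguably the ``right'' one.
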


\begin{proof}
Given $\varepsilon>0$, $j\in J$, and $0\ne w\in X_+$. Let $(\hat{X},\hat{\tau})$ be the topological completion of $(X,\tau)$. 
In particular, $(\hat{X},\hat{\tau})$ is  sequentially complete. Let ${\hat{V}}_{\varepsilon,w,j}=\{\hat{x}\in\hat{X}\colon{\hat{\rho}}_j(\abs{\hat{x}}\wedge w)<\varepsilon\}$. 
Then $V_{\varepsilon,w,j}=X\cap{\hat{V}}_{\varepsilon,w,j}$. By Lemma \ref{dichotomy}, either ${\hat{V}}_{\varepsilon,w,j}$ is a subset of $[-w,w]_{\hat{X}}$ in $\hat{X}$ or 
${\hat{V}}_{\varepsilon,w,j}$ contains a non-trivial ideal of $\hat{X}$. If ${\hat{V}}_{\varepsilon,w,j}\subseteq [-w,w]_{\hat{X}}$, then 
$$
  V_{\varepsilon,w,j}=X\cap{\hat{V}}_{\varepsilon,w,j}\subseteq X\cap[-w,w]_{\hat{X}}=[-w,w]\subseteq X. 
$$
If ${\hat{V}}_{\varepsilon,w,j}$ contains a non-trivial ideal, then ${\hat{V}}_{\varepsilon,w,j}\nsubseteq[-w,w]_{\hat{X}}$. 
So, there is $\hat{x}\in\hat{V}_{\varepsilon,w,j}$ with $\hat{x}\notin[-w,w]_{\hat{X}}$. 
Since $[-w,w]_{\hat{X}}$ is $\hat{\tau}$-closed, then there is a solid neighborhood ${N}_{\hat{x}}$ of $\hat{x}$ in $\hat{X}$ 
such that ${N}_{\hat{x}}\cap[-w,w]_{\hat{X}}=\emptyset$. Hence, ${N}_{\hat{x}}\cap{\hat{V}}_{\varepsilon,w,j}\cap[-w,w]_{\hat{X}}=\emptyset$, and  
$N_{\hat{x}}\cap{\hat{V}}_{\varepsilon,w,j}$ is open in $\hat{X}$ with $\hat{x}\in{N}_{\hat{x}}\cap{\hat{V}}_{\varepsilon,w,j}$. 
By $\tau$-density of $X$ in $\hat{X}$, we may take $x\in X\cap{N}_{\hat{x}}\cap{\hat{V}}_{\varepsilon,w,j}$. 
Since $\abs{x}\in X\cap{N}_{\hat{x}}\cap{\hat{V}}_{\varepsilon,w,j}$, we may also assume that $x\in X_+$. 
	
Let $y:=(x-w)^+$, then $y>0$ and $y\in X_+$. By the same argument in Lemma \ref{dichotomy}, we get $(\alpha y)\wedge w\leq x\wedge w$ for all $\alpha\in\mathbb{R}_+$. 
Since $x\in{\hat{V}}_{\varepsilon,w,j}$, then $\alpha y\in{\hat{V}}_{\varepsilon,w,j}$ for all $\alpha\in\mathbb{R}_+$. 
	But $\alpha y\in X_+$ for all $\alpha\in\mathbb{R}_+$ and, since $V_{\varepsilon,w,j}=X\cap{\hat{V}}_{\varepsilon,w,j}$, 
	we get $\alpha y\in V_{\varepsilon,w,j}$ for all $\alpha\in\mathbb{R}_+$. Since $V_{\varepsilon,w,j}$ is solid, 
	we conclude that the principal ideal $I_y$ taken in $X$ is a subset of $V_{\varepsilon,w,j}$.
\end{proof}

\begin{lem}\label{V-bdd-su}
Let $(X,\tau)$ be a locally solid  vector lattice, where $\tau$ is generated by a family $(\rho_j)_{j\in J}$ of Riesz pseudonorms. 
If $V_{\varepsilon,w,j}$ is contained in $[-w,w]$, then $w$ is a strong unit.
\end{lem}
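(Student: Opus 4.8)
The plan is to argue by contradiction: assume $w$ is not a strong unit and then manufacture an element of $V_{\varepsilon,w,j}$ that falls outside the order interval $[-w,w]$, contradicting the hypothesis.

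First I would unpack what it means for $w$ to fail to be a strong unit. By definition $w$ is a strong unit precisely when $I_w=X$; so if $w$ is not a strong unit, there must exist some $x\in X_+$ with $x\notin I_w$, and in particular $x\not\le nw$ for every $n\in\mathbb{N}$. This element $x$ is the seed of the contradiction.

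Next I would shrink $x$ by scalars and use that $\rho_j$ is a Riesz pseudonorm. The relevant axiom is that $\rho_j(\tfrac1n x)\to 0$ as $n\to\infty$ (continuity of scalar multiplication at the origin, built into the definition of a Riesz pseudonorm). Since $0\le \tfrac1n x\wedge w\le \tfrac1n x$ and $\rho_j$ is monotone on $X_+$, we get $\rho_j\!\left(\bigl|\tfrac1n x\bigr|\wedge w\right)=\rho_j\!\left(\tfrac1n x\wedge w\right)\le\rho_j\!\left(\tfrac1n x\right)\to 0$. Hence I can fix an $n$ with $\rho_j\!\left(\bigl|\tfrac1n x\bigr|\wedge w\right)<\varepsilon$, which says exactly that $\tfrac1n x\in V_{\varepsilon,w,j}$.

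Finally, I would invoke the hypothesis $V_{\varepsilon,w,j}\subseteq[-w,w]$ to conclude $\tfrac1n x\le w$, i.e.\ $x\le nw$, contradicting the choice of $x$. Therefore no such $x$ exists, $I_w=X$, and $w$ is a strong unit. I do not expect a genuine obstacle here; the only point requiring care is citing the correct property of Riesz pseudonorms, namely $\lim_{t\to0^+}\rho_j(tx)=0$ for each fixed $x\in X$, which is precisely the tool that makes the scaling argument work.
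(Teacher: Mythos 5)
Your proof is correct and is essentially the paper's argument: the paper simply notes that $V_{\varepsilon,w,j}$ is absorbing (so some scalar multiple $\alpha x$ lands in $V_{\varepsilon,w,j}\subseteq[-w,w]$, giving $x\le\frac{1}{\alpha}w$), while you derive that same absorbing property explicitly from the Riesz pseudonorm axiom $\rho_j(\tfrac1n x)\to 0$. The contradiction framing is superfluous --- your computation already shows directly that every $x\in X_+$ satisfies $x\le nw$ for some $n$ --- but nothing is wrong.
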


\begin{proof}
Suppose $V_{\varepsilon,w,j}\subseteq[-w,w] $. Since  $V_{\varepsilon,w,j}$ is absorbing,  for any $x\in X_+$, 
there exist $\alpha>0$ such that $\alpha x\in V_{\varepsilon,w,j}$ , and so $\alpha x\in[-w,w]$, or $x\leq\frac{1}{\alpha}w$. 
Thus $w$ is a strong unit, as desired. 
\end{proof}

\begin{prop}\label{q.i.p in X and its completion}
	Let $e\in X_+$.  Then $e$ is a quasi-interior point in $(X,\tau)$ iff $e$ is a quasi-interior point in the topological completion $(\hat{X},\hat{\tau})$.
\end{prop}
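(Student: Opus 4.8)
The plan is to compare, inside the completion $(\hat X,\hat\tau)$, the $\hat\tau$-closure of the ideal $I_e$ generated by $e$ in $X$ with the $\hat\tau$-closure of the ideal $I_e^{\hat X}$ generated by $e$ in $\hat X$ (we may clearly assume $e\neq 0$, since $0$ is never a quasi-interior point). Recall that $(\hat X,\hat\tau)$ is again a Hausdorff locally solid vector lattice, that $X$ sits inside it as a $\hat\tau$-dense sublattice carrying the induced order, and that in any locally solid vector lattice the lattice operations are (uniformly) continuous. Since $I_e\subseteq I_e^{\hat X}$ we get $\overline{I_e}^{\hat X}\subseteq\overline{I_e^{\hat X}}^{\hat X}$ for free; the key point is the reverse inclusion.

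First I would establish the truncation claim $I_e^{\hat X}\subseteq\overline{I_e}^{\hat X}$. Fix $\hat y\in I_e^{\hat X}$, so $\abs{\hat y}\le ne$ for some $n\in\mathbb N$, and note that $ne\in X$. By density, choose a net $(x_\alpha)$ in $X$ with $x_\alpha\to\hat y$ in $\hat\tau$, and set $y_\alpha:=\bigl(x_\alpha\vee(-ne)\bigr)\wedge(ne)$. Each $y_\alpha$ is a lattice expression in the elements $x_\alpha,ne\in X$, hence $y_\alpha\in X$, and $-ne\le y_\alpha\le ne$, so $y_\alpha\in I_e$. The truncation map $u\mapsto\bigl(u\vee(-ne)\bigr)\wedge(ne)$ is $\hat\tau$-continuous, being a composition of continuous lattice operations, and it fixes $\hat y$ because $-ne\le\hat y\le ne$; therefore $y_\alpha\to\hat y$ in $\hat\tau$, which gives $\hat y\in\overline{I_e}^{\hat X}$. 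This proves $\overline{I_e}^{\hat X}=\overline{I_e^{\hat X}}^{\hat X}$.

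With this in hand both implications are formal, using in addition the subspace identity $\overline{I_e}^{X}=X\cap\overline{I_e}^{\hat X}$ and the density $\overline{X}^{\hat X}=\hat X$. If $e$ is a quasi-interior point of $X$, then $X=\overline{I_e}^{X}\subseteq\overline{I_e}^{\hat X}$, hence
$$
  \hat X=\overline{X}^{\hat X}\subseteq\overline{I_e}^{\hat X}=\overline{I_e^{\hat X}}^{\hat X}\subseteq\hat X,
$$
so $I_e^{\hat X}$ is $\hat\tau$-dense and $e$ is a quasi-interior point of $\hat X$. Conversely, if $e$ is a quasi-interior point of $\hat X$, then $\hat X=\overline{I_e^{\hat X}}^{\hat X}=\overline{I_e}^{\hat X}$, whence $\overline{I_e}^{X}=X\cap\overline{I_e}^{\hat X}=X$, and $e$ is a quasi-interior point of $X$.

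The only genuine obstacle is the truncation step: one must truncate at $ne$, which lies in $X$, so that the approximating net stays inside the smaller ideal $I_e$ rather than merely inside $I_e^{\hat X}$, and one must invoke the $\hat\tau$-continuity of the lattice operations of $(\hat X,\hat\tau)$; everything afterwards is bookkeeping with closures. (Alternatively, the backward implication could be carried out directly by approximating a given $x\in X$ first within $X$ and then within $I_e$, but routing through $\overline{I_e}^{\hat X}$ is cleaner and settles both directions simultaneously.)
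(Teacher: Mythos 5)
Your proof is correct, but it takes a genuinely different route from the paper's. The paper does not argue with ideals and closures at all: it invokes the standard reformulation that $e$ is a quasi-interior point iff $x-x\wedge ne\tc 0$ for every positive $x$, which makes the backward implication immediate (restrict from $\hat{X}_+$ to $X_+$), and proves the forward implication by a three-$\varepsilon$ estimate on the Riesz pseudonorms $\hat{\rho}_j$: approximate $\hat{x}\in\hat{X}_+$ by a net in $X_+$, apply the quasi-interior property in $X$ to the approximant, and control the error term $\hat{\rho}_j(ne\wedge x_{\alpha_\varepsilon}-ne\wedge\hat{x})$ by $\hat{\rho}_j(x_{\alpha_\varepsilon}-\hat{x})$ via the Birkhoff-type inequality $\abs{a\wedge c-b\wedge c}\le\abs{a-b}$. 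You instead work straight from the definition of quasi-interior point as density of the principal ideal, and the whole content of your argument is the single identity $\overline{I_e}^{\hat{X}}=\overline{I_e^{\hat{X}}}^{\hat{X}}$, obtained by truncating an approximating net at $\pm ne$ and using the ($\hat{\tau}$-uniform) continuity of the lattice operations in the locally solid completion; after that both directions are closure bookkeeping. The two arguments lean on essentially the same continuity fact (your truncation map versus the paper's wedge inequality), but yours packages it once and dispenses with pseudonorms entirely, while the paper's version produces the quantitative statement $\hat{\rho}_j(\hat{x}-\hat{x}\wedge ne)\le 3\varepsilon$ along the way, which is the form actually reused elsewhere (e.g.\ in Corollary \ref{ut-convergence at q.i.p}). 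One small remark: the truncation identity is only needed for the backward implication; the forward one already follows from the trivial inclusion $I_e\subseteq I_e^{\hat{X}}$ together with the density of $X$ in $\hat{X}$.
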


\begin{proof}\
	The backward implication is trivial.\\
	For the forward implication let $\hat{x}\in\hat{X}_+$. Our aim is to show that $\hat{x}-\hat{x}\wedge ne\tc 0$ in $\hat{X}$ as $n\to\infty$. By \cite[Thm. 2.40]{Aliprantis:03}, 
	$\hat{X}_+={\overline{X}^{\hat{\tau}}_+}$. So, there is a net $x_\alpha$ in $X_+$ such that $x_\alpha\thc\hat{x}$ in $\hat{X}$.
	Let $j\in J$ and $\varepsilon>0$. Since $\hat{\rho}_j(x_\alpha-\hat{x})\to 0$, then there is $\alpha_\varepsilon$ satisfying
\begin{equation}\label{2}
	\hat{\rho}_j(x_{\alpha_\varepsilon}-\hat{x})<\varepsilon.
\end{equation}
Since $e$ is a quasi-interior point in $X$ and $x_{\alpha_\varepsilon}\in X_+$, then $x_{\alpha_\varepsilon}-x_{\alpha_\varepsilon}\wedge ne\tc 0$ in $X$ as $n\to\infty$. 
Thus, there is $n_\varepsilon\in\mathbb{N}$ such that
\begin{equation}\label{3}
	\hat{\rho}_j(x_{\alpha_\varepsilon}-ne\wedge x_{\alpha_\varepsilon})=\rho_j(x_{\alpha_\varepsilon}-ne\wedge x_{\alpha_\varepsilon})<\varepsilon ~~~~ (\forall n\ge n_\varepsilon).
\end{equation}
Now, $0\leq\hat{x}-\hat{x}\wedge ne=\hat{x}-x_{\alpha_\varepsilon}+x_{\alpha_\varepsilon}-ne\wedge x_{\alpha_\varepsilon}+ne\wedge x_{\alpha_\varepsilon}-\hat{x}\wedge ne$. 
So $\hat{\rho}_j(\hat{x}-\hat{x}\wedge ne)\leq\hat{\rho}_j(\hat{x}-x_{\alpha_\varepsilon})+\hat{\rho}_j(x_{\alpha_\varepsilon}-ne\wedge x_{\alpha_\varepsilon})+\hat{\rho}_j(ne\wedge x_{\alpha_\varepsilon}-\hat{x}\wedge ne)$. 
For $n\ge n_\varepsilon$, we have, by \eqref{2}, \eqref{3}, and \cite[Thm. 1.9(2)]{Aliprantis:06}, that
$$
  \hat{\rho}_j(\hat{x}-\hat{x}\wedge ne)\leq\varepsilon+\varepsilon+\hat{\rho}_j(x_{\alpha_\varepsilon}-\hat{x})\leq 3\varepsilon. 
$$ 
Therefore, $e$ is a quasi-interior point in $\hat{X}$.
\end{proof}	

The technique used in the proof of \cite[Thm. 3.1]{KMT} can be used in the following theorem as well, and so we omit its proof.

\begin{thm}\label{qip is qip in completion}
Let $(X,\tau)$ be a sequentially complete locally solid  vector lattice, where $\tau$ is generated by a family $(\rho_j)_{j\in J}$ of Riesz pseudonorms. Let $e\in X_+$. The following are equivalent:
	\begin{enumerate}
		\item\label{qip for ut convergence} $e$ is a quasi-interior point;
		\item\label{qip for ut convergence net version} for every net $x_\alpha$ in $X_+$, if $x_\alpha\wedge e\tc 0$ then $x_\alpha\utc 0$;
		\item\label{qip-seq} for every sequence $x_n$ in $X_+$, if $x_n\wedge e\tc 0$ then $x_n\utc 0$. 
	\end{enumerate}
\end{thm}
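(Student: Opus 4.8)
The plan is to prove the implications $(1)\Rightarrow(2)\Rightarrow(3)\Rightarrow(1)$, the middle one being trivial since every sequence is a net.

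For $(1)\Rightarrow(2)$ I would first recall the standard fact that if $e$ is a quasi-interior point then $w-w\wedge ne\tc 0$ as $n\to\infty$ for every $w\in X_+$: given a solid $\tau$-neighbourhood $V$ of $0$, choose $y\in I_e$ with $w-y\in V$ and $m\in\mathbb{N}$ with $\abs{y}\le me$; then $0\le(w-me)^+\le\abs{w-y}$, so $(w-me)^+\in V$, and $(w-ne)^+\in V$ for all $n\ge m$ by solidity. Now let $x_\alpha$ be a net in $X_+$ with $x_\alpha\wedge e\tc 0$, and fix $w\in X_+$, $j\in J$ and $\varepsilon>0$. Combining the elementary lattice inequalities $a\wedge(b+c)\le(a\wedge b)+c$ (for $c\ge 0$) and $a\wedge nb\le n(a\wedge b)$ (for $a,b\ge 0$ and $n\in\mathbb{N}$), one obtains
\[
 x_\alpha\wedge w\le n\,(x_\alpha\wedge e)+(w-w\wedge ne)
\]
and hence $\rho_j(x_\alpha\wedge w)\le n\,\rho_j(x_\alpha\wedge e)+\rho_j(w-w\wedge ne)$. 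Fix $n$ with $\rho_j(w-w\wedge ne)<\varepsilon/2$; since $\rho_j(x_\alpha\wedge e)\to 0$, eventually $n\,\rho_j(x_\alpha\wedge e)<\varepsilon/2$, so $\rho_j(x_\alpha\wedge w)\to 0$. As $j$ and $w$ are arbitrary, $x_\alpha\utc 0$.

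The substantial part is $(3)\Rightarrow(1)$, which I would prove by contraposition. Suppose $e$ is not a quasi-interior point, i.e.\ $I_e$ is not $\tau$-dense; since the $\tau$-closure of an ideal is again an ideal, there is $x\in X_+$ with $x\notin\overline{I_e}$. Put $x_n:=(x-ne)^+=x-x\wedge ne\in X_+$. First, $x_n\not\utc 0$: from $0\le x_n\le x$ we get $\abs{x_n}\wedge x=x_n$, and $x_n\not\tc 0$ because $x-x_n=x\wedge ne\in I_e$, so $x_n\tc 0$ would force $x\in\overline{I_e}$; thus $\abs{x_n}\wedge x=x_n\not\tc 0$. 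Second, $x_n\wedge e\tc 0$, and this is where sequential completeness enters: setting $z:=x\vee e$, the ideal $I_z$ is lattice isometric to $C(K)$ for some compact Hausdorff $K$, with $z\leftrightarrow\one$, $x\leftrightarrow\hat x$, $e\leftrightarrow\hat e$, where $0\le\hat x\le\one$ and $0\le\hat e\le\one$. For $\delta>0$ and $n\ge 1/\delta$, a pointwise check on $K$ gives $x_n\wedge e\le\delta z$: at $t$ with $\hat e(t)\le\delta$ one has $(x_n\wedge e)(t)\le\hat e(t)\le\delta$, while at $t$ with $\hat e(t)>\delta$ one has $n\hat e(t)\ge 1\ge\hat x(t)$, so $(\hat x-n\hat e)^+(t)=0$ and $(x_n\wedge e)(t)=0$. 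Consequently $\rho_j(x_n\wedge e)\le\rho_j(\delta z)$ for all $n\ge 1/\delta$, and since $\rho_j(\delta z)\to 0$ as $\delta\to 0^+$, we conclude $\rho_j(x_n\wedge e)\to 0$ for each $j\in J$, i.e.\ $x_n\wedge e\tc 0$. Therefore $x_n\wedge e\tc 0$ while $x_n\not\utc 0$, contradicting (3).

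I expect the verification of $x_n\wedge e\tc 0$ to be the crux. It is precisely here that sequential completeness is indispensable, since it lets one realize $I_z$ as an $AM$-space and hence as a $C(K)$; the delicate point is to upgrade the easy pointwise statement ``$(\hat x-n\hat e)^+\wedge\hat e\to 0$ on $K$'', which is not a topological assertion, into the uniform domination $x_n\wedge e\le\delta z$ valid for all large $n$. Everything else reduces to routine estimates with Riesz pseudonorms (monotonicity, subadditivity, and $\rho_j(\lambda z)\to 0$ as $\lambda\to 0$) together with elementary vector-lattice identities.
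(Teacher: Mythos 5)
Your proof is correct and follows precisely the technique of \cite[Thm.~3.1]{KMT} that the paper invokes in place of writing out its own proof: the estimate $x_\alpha\wedge w\le n(x_\alpha\wedge e)+(w-w\wedge ne)$ for $(1)\Rightarrow(2)$, and the test sequence $x_n=(x-ne)^+$ with $x\in X_+\setminus\overline{I_e}$ for $(3)\Rightarrow(1)$. The only quibble is with your closing remark: the domination you read off in $C(K)$ amounts to the inequality $(x-ne)^+\wedge e\le\frac{1}{n}(x\vee e)$, which holds in every vector lattice, so sequential completeness is a convenience of your particular route through the $AM$-space representation rather than genuinely indispensable at that step.
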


\section{Unbounded $\tau$-convergence in sublattices}	

Let $Y$ be a sublattice of a locally solid vector lattice $(X,\tau)$. If $y_\alpha$ is a net in $Y$ such that $y_\alpha \utc 0$ in $X$, then clearly, $y_\alpha \utc 0$ in $Y$. The converse does not hold in general. For example, the sequence $e_n$ of standard unit vectors is $un$-null in $c_0$, but not in $\ell_\infty$. In this section, we study when the $u \tau$-convergence passes from a sublattice to the whole space.

Recall that a sublattice $Y$ of a vector lattice $X$ is {\em majorizing} if, for every $x\in X_+$, there exists $y\in Y_+$ with $x\le y$. 
The following theorem extends \cite[Thm. 4.3]{KMT} to locally solid vector lattices.

\begin{thm}\label{Um convergence in sublattices}
Let $(X,\tau)$ be a locally solid vector lattice and $Y$ be a sublattice of $X$. If $y_\alpha$ is a net in $Y$ and $y_\alpha\utc 0$ in $Y$, then $y_\alpha\utc 0$ in $X$ in each of the following cases$:$
\begin{enumerate}
		\item\label{majorizing sblattice} $Y$ is majorizing in $X$$;$
		\item $Y$\label{tau-dense sublattice} is $\tau$-dense in $X$$;$
		\item\label{projection band sublattice} $Y$ is a projection band in $X$.
\end{enumerate}
\end{thm}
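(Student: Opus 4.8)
The plan is to fix a family $(\rho_j)_{j\in J}$ of Riesz pseudonorms generating $\tau$; their restrictions to $Y$ generate the induced locally solid topology, so the hypothesis $y_\alpha\utc 0$ in $Y$ says precisely that $\rho_j(\abs{y_\alpha}\wedge u)\to 0$ for every $j\in J$ and every $u\in Y_+$. Fixing an arbitrary $w\in X_+$ and $j\in J$, it then suffices to show $\rho_j(\abs{y_\alpha}\wedge w)\to 0$. In all three cases the strategy is identical: dominate $\abs{y_\alpha}\wedge w$, either exactly or up to an $\varepsilon$-term, by $\abs{y_\alpha}\wedge u$ for a well-chosen $u\in Y_+$, and then use monotonicity and subadditivity of $\rho_j$ together with the elementary lattice inequality $a\wedge(b+c)\le a\wedge b+a\wedge c$, valid for all $a,b,c\in X_+$.

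In the majorizing case one simply picks $u\in Y_+$ with $w\le u$; then $\abs{y_\alpha}\wedge w\le\abs{y_\alpha}\wedge u$ and hence $\rho_j(\abs{y_\alpha}\wedge w)\le\rho_j(\abs{y_\alpha}\wedge u)\to 0$. In the projection band case, let $P$ be the band projection of $X$ onto $Y$ and write $w=w_1+w_2$ with $w_1=Pw\in Y_+$ and $w_2=(I-P)w$ in the disjoint complement $Y^{d}$ of $Y$; since $\abs{y_\alpha}\in Y$ is disjoint from $w_2$, the componentwise action of the lattice operations with respect to the decomposition $X=Y\oplus Y^{d}$ gives $\abs{y_\alpha}\wedge w=\abs{y_\alpha}\wedge w_1$, and $w_1\in Y_+$ yields $\rho_j(\abs{y_\alpha}\wedge w)=\rho_j(\abs{y_\alpha}\wedge w_1)\to 0$.

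The $\tau$-dense case carries the only real work. First I would note that $Y_+$ is $\tau$-dense in $X_+$: given $w\in X_+$, choose a net $z_\gamma$ in $Y$ with $z_\gamma\tc w$; then $z_\gamma^{+}\in Y_+$ and $z_\gamma^{+}\tc w^{+}=w$ by $\tau$-continuity of the lattice operations. Hence, given $\varepsilon>0$, there is $u\in Y_+$ with $\rho_j(\abs{w-u})<\varepsilon$. From $w\le u+\abs{w-u}$ and the lattice inequality, $\abs{y_\alpha}\wedge w\le\abs{y_\alpha}\wedge u+\abs{y_\alpha}\wedge\abs{w-u}\le\abs{y_\alpha}\wedge u+\abs{w-u}$, so that $\rho_j(\abs{y_\alpha}\wedge w)\le\rho_j(\abs{y_\alpha}\wedge u)+\rho_j(\abs{w-u})<\rho_j(\abs{y_\alpha}\wedge u)+\varepsilon$. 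Since $u\in Y_+$, the first summand tends to $0$, whence $\limsup_\alpha\rho_j(\abs{y_\alpha}\wedge w)\le\varepsilon$; letting $\varepsilon\downarrow 0$ completes the proof.

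The one point demanding care is the $\tau$-dense case: one must upgrade $\tau$-density of $Y$ to $\tau$-density of $Y_+$ in $X_+$, and then make the $\varepsilon$-approximation of $w$ work uniformly in the index $\alpha$ --- which is exactly what monotonicity of $\rho_j$ and the inequality $a\wedge(b+c)\le a\wedge b+a\wedge c$ deliver. The majorizing and projection-band cases are then immediate variants of the same domination idea, so I anticipate no further obstacle.
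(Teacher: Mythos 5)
Your proposal is correct and follows essentially the same route as the paper: domination by an element of $Y_+$ in the majorizing case, the $\varepsilon$-approximation $w\le u+\abs{w-u}$ combined with $\rho_j(\abs{y_\alpha}\wedge w)\le\rho_j(\abs{y_\alpha}\wedge u)+\rho_j(w-u)$ in the dense case, and the band decomposition $\abs{y_\alpha}\wedge w=\abs{y_\alpha}\wedge Pw$ in the projection-band case. The only (welcome) refinement is that you explicitly justify the density of $Y_+$ in $X_+$ by passing to positive parts, a point the paper's proof uses tacitly.
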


\begin{proof}
\begin{enumerate}
  	\item Trivial.
		\item Let $u\in X_+$. Fix $\varepsilon>0$ and take $j\in J$. Since $Y$ is $\tau$-dense in $X$, then there is $v\in Y_+$ such that 
		$\rho_j(u-v)<\varepsilon$. But $y_\alpha\utc 0$ in $Y$ and so, in particular, $\rho_j(\abs{y_\alpha}\wedge v)\to 0$. So there is $\alpha_0$ 
		such that $\rho_j(\abs{y_\alpha}\wedge v)<\varepsilon$ for all $\alpha\ge\alpha_0$. 
		It follows from $u\leq v+\abs{u-v}$, that $\abs{y_\alpha}\wedge u\leq\abs{y_\alpha}\wedge v+\abs{u-v}$, and so $\rho_j(\abs{y_\alpha}\wedge u)<\rho_j(\abs{y_\alpha}\wedge v)+\rho_j(u-v)<2\varepsilon$. 
		Thus, $\rho_j(\abs{y_\alpha}\wedge u)\to 0$ in $\mathbb{R}$. Since $j\in J$ was chosen arbitrary, we conclude that $y_\alpha\utc 0$ in $X$.
		\item Let $u\in X_+$. Then $u=v +w$, where $v\in Y_+$ and $w\in Y_+^d$. 
		Now $\abs{y_\alpha}\wedge u=\abs{y_\alpha}\wedge v+\abs{y_\alpha}\wedge w=\abs{y_\alpha}\wedge v$, since $y_\alpha\in Y$. 
		Then $\abs{y_\alpha}\wedge u=\abs{y_\alpha}\wedge v\tc 0$ in $X$.
\end{enumerate}
\end{proof}
	
\begin{cor}
If $(X,\tau)$ is a locally solid vector lattice and $x_\alpha\utc 0$ in $X$, then $x_\alpha\utc 0$ in the Dedekind completion $X^\delta$ of $X$.
\end{cor}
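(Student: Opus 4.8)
The plan is to exhibit $X$ as a majorizing sublattice of $X^\delta$ equipped with the canonical locally solid extension of $\tau$, and then to invoke the majorizing case of Theorem~\ref{Um convergence in sublattices}. First, since $(X,\tau)$ is Hausdorff and locally solid it is Archimedean, so the classical Dedekind completion theorem (cf.\ \cite[Thm.~1.41]{Aliprantis:06}) provides a Dedekind complete vector lattice $X^\delta$ in which $X$ sits as an order dense and \emph{majorizing} Riesz subspace; in particular, every $\hat{x}\in X^\delta_+$ satisfies $\hat{x}\le x$ for some $x\in X_+$.

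Next I would equip $X^\delta$ with the appropriate topology. Writing $\tau$ as the topology generated by a family $(\rho_j)_{j\in J}$ of Riesz pseudonorms, set
\[
  \rho_j^\delta(\hat{x})\coloneqq\inf\{\rho_j(x)\mid x\in X_+,\ \abs{\hat{x}}\le x\}\qquad(\hat{x}\in X^\delta,\ j\in J),
\]
which makes sense precisely because $X$ is majorizing in $X^\delta$. A routine check (subadditivity, solidity, and continuity in the scalar, using only the corresponding properties of the $\rho_j$ together with majorization) shows each $\rho_j^\delta$ is a Riesz pseudonorm on $X^\delta$ with $\rho_j^\delta|_X=\rho_j$; moreover $(\rho_j^\delta)_{j\in J}$ is separating, since order density of $X$ and the inequality $\rho_j^\delta(\hat{x})\ge\rho_j(x)$ whenever $0\le x\le\abs{\hat{x}}$ with $x\in X$ force $\hat{x}=0$ once all $\rho_j^\delta(\hat{x})$ vanish. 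Hence $(\rho_j^\delta)_{j\in J}$ generates a Hausdorff locally solid topology $\tau^\delta$ on $X^\delta$ with $\tau^\delta|_X=\tau$; that $X^\delta$ carries such an extension of $\tau$ is itself a classical fact about locally solid spaces (see \cite{Aliprantis:03}).

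Finally, regarding $X$ as a sublattice of the locally solid vector lattice $(X^\delta,\tau^\delta)$, the hypothesis $x_\alpha\utc 0$ in $(X,\tau)$ is exactly $x_\alpha\utc 0$ in $X$ viewed inside $X^\delta$, and $X$ is majorizing there, so part~(\ref{majorizing sblattice}) of Theorem~\ref{Um convergence in sublattices} yields $x_\alpha\utc 0$ in $X^\delta$. Concretely: for $u\in X^\delta_+$ choose an $X$-majorant $x\in X_+$ of $u$; then $\abs{x_\alpha}\wedge u\le\abs{x_\alpha}\wedge x$ in $X^\delta$, while $\abs{x_\alpha}\wedge x\in X$ and $\abs{x_\alpha}\wedge x\tc 0$ in $X$, so monotonicity of the $\rho_j^\delta$ gives $\abs{x_\alpha}\wedge u\to 0$ in $\tau^\delta$.

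The only non-routine ingredient is Step~2, namely producing a \emph{Hausdorff} locally solid topology on $X^\delta$ that restricts to $\tau$ on $X$, which I would handle by citation rather than reprove; granting it, the corollary is immediate from the majorizing (``trivial'') case of the sublattice theorem, the whole argument being just that case applied to the pair $X\subseteq X^\delta$.
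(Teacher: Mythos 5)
Your proof is correct and follows the same route the paper intends: the corollary is exactly the majorizing case, Theorem~\ref{Um convergence in sublattices}(\ref{majorizing sblattice}), applied to the pair $X\subseteq X^\delta$. The only thing you add is the explicit construction of the locally solid extension $\tau^\delta$ via $\rho_j^\delta(\hat{x})=\inf\{\rho_j(x)\mid x\in X_+,\ \abs{\hat{x}}\le x\}$, which the paper leaves implicit (it is the standard fact that a Hausdorff locally solid topology extends to the Dedekind completion, cf.\ \cite{Aliprantis:03}), and your verification of it --- in particular the use of order density to see that the extended family is separating --- is sound.
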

	
\begin{cor}
If $(X,\tau)$ is a locally solid vector lattice and $x_\alpha\utc 0$ in $X$, then $x_\alpha\utc 0$ in the topological completion $\hat{X}$ of $X$.
\end{cor}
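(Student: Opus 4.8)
The plan is to obtain this corollary as an instance of Theorem~\ref{Um convergence in sublattices}, with $X$ playing the role of a $\tau$-dense sublattice of its own topological completion. So the first thing I would do is record the relevant structure of the completion, which has already been used earlier in the paper: $(\hat X,\hat\tau)$ is again a locally solid vector lattice, $X$ sits inside $\hat X$ as a Riesz subspace whose lattice operations are the restrictions of those of $\hat X$, $X$ is $\hat\tau$-dense in $\hat X$ by construction, and moreover $\hat X_+=\overline{X}^{\hat\tau}_+$ by \cite[Thm. 2.40]{Aliprantis:03}. Consequently, each Riesz pseudonorm $\rho_j$ generating $\tau$ extends to a Riesz pseudonorm $\hat\rho_j$ on $\hat X$, the family $(\hat\rho_j)_{j\in J}$ generates $\hat\tau$, and $\hat\rho_j$ agrees with $\rho_j$ on $X$.

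With these facts in hand the conclusion is immediate: $x_\alpha$ is a net lying in the sublattice $X$, it $u\tau$-converges to $0$ in $X$ by hypothesis, and $X$ is $\tau$-dense in $\hat X$, so the $\tau$-dense case of Theorem~\ref{Um convergence in sublattices} yields $x_\alpha\utc 0$ in $\hat X$.

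If one prefers to spell the argument out, it is precisely the estimate appearing in the proof of that theorem, run in the present setting. Given $\hat w\in\hat X_+$, $j\in J$, and $\varepsilon>0$, choose $w\in X_+$ with $\hat\rho_j(\hat w-w)<\varepsilon$ (possible since $\hat X_+=\overline{X}^{\hat\tau}_+$). From $\hat w\le w+\abs{\hat w-w}$ one gets $\abs{x_\alpha}\wedge\hat w\le\abs{x_\alpha}\wedge w+\abs{\hat w-w}$, hence, using subadditivity and monotonicity of $\hat\rho_j$ together with $\hat\rho_j|_X=\rho_j$, $\hat\rho_j(\abs{x_\alpha}\wedge\hat w)\le\rho_j(\abs{x_\alpha}\wedge w)+\varepsilon$. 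Since $x_\alpha\utc 0$ in $X$ gives $\rho_j(\abs{x_\alpha}\wedge w)\to 0$, and since $\varepsilon$ and $j$ were arbitrary, we conclude $\abs{x_\alpha}\wedge\hat w\thc 0$ for every $\hat w\in\hat X_+$, that is, $x_\alpha\utc 0$ in $\hat X$.

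There is no serious obstacle here; the only point requiring care is the structural input — that the topological completion of a locally solid vector lattice is again a locally solid vector lattice containing $X$ as a $\tau$-dense Riesz subspace with compatible lattice operations — and this is a standard fact that has already been invoked earlier in the paper, so I would simply cite it rather than reprove it.
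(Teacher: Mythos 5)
Your proposal is correct and is exactly the argument the paper intends: the corollary is stated without proof precisely because it is the $\tau$-dense case of Theorem~\ref{Um convergence in sublattices} applied to $X$ sitting densely in $(\hat X,\hat\tau)$. Your spelled-out estimate simply reruns the proof of that case, so nothing further is needed.
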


The next result generalizes Corollary 4.6 in \cite{KMT} and Proposition 16 in \cite{Zab}. 

\begin{thm}\label{u-convergence from a sublattice}
Let $(X,\tau)$ be a topologically complete locally solid vector lattice that possesses the Lebesgue property, and $Y$ be a sublattice of $X$. 
If $y_\alpha\utc 0$ in $Y$, then $y_\alpha\utc 0$ in $X$.
\end{thm}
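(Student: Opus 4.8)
The plan is to transplant the argument behind \cite[Cor.~4.6]{KMT} to the locally solid setting: one reduces to the band generated by $Y$, then uses the Lebesgue property to approximate each positive vector of $X$ from below by an element of the \emph{ideal} generated by $Y$, and finishes with an estimate involving a single Riesz pseudonorm.

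First I would set up the reduction. Since $(X,\tau)$ is topologically complete and has the Lebesgue property, $X$ is Dedekind complete (see \cite{Aliprantis:03}); hence the band $B:=Y^{dd}$ generated by $Y$ is a projection band of $X$. By the projection band case of Theorem~\ref{Um convergence in sublattices} it then suffices to prove $y_\alpha\utc 0$ in $B$, so, replacing $X$ by $B$, we may assume $Y^{dd}=X$. Let $I_Y$ be the ideal generated by $Y$. Then $(I_Y)^{dd}=Y^{dd}=X$, and since an ideal of an Archimedean vector lattice is order dense exactly when its double disjoint complement is the whole space, $I_Y$ is order dense in $X$. Moreover, as $Y$ is a sublattice, every $v\in(I_Y)_+$ lies below some $y\in Y_+$: if $v\le\lambda_1\abs{y_1}+\dots+\lambda_n\abs{y_n}$ with $y_i\in Y$ and $\lambda_i\ge 0$, then $v\le(\lambda_1+\dots+\lambda_n)(\abs{y_1}\vee\dots\vee\abs{y_n})$, and $\abs{y_1}\vee\dots\vee\abs{y_n}\in Y_+$.

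Next I would fix $u\in X_+$, an index $j\in J$, and $\varepsilon>0$. The set $D:=\{v\in I_Y:0\le v\le u\}$ is directed upward, and order density of $I_Y$ gives $\sup D=u$; hence $(u-v)_{v\in D}$ decreases to $0$, and the Lebesgue property yields $\rho_j(u-v)\to 0$ along $D$. Pick $v_0\in D$ with $\rho_j(u-v_0)<\varepsilon$ and, by the previous paragraph, $y\in Y_+$ with $v_0\le y$. For every $\alpha$, writing $u=v_0+(u-v_0)$ and combining the elementary inequality $a\wedge(b+c)\le(a\wedge b)+c$ for $a,b,c\in X_+$ with $v_0\le y$,
\[
  \abs{y_\alpha}\wedge u\ \le\ \abs{y_\alpha}\wedge v_0+(u-v_0)\ \le\ \abs{y_\alpha}\wedge y+(u-v_0),
\]
so that $\rho_j(\abs{y_\alpha}\wedge u)\le\rho_j(\abs{y_\alpha}\wedge y)+\rho_j(u-v_0)<\rho_j(\abs{y_\alpha}\wedge y)+\varepsilon$. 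Since $y\in Y_+$ and $y_\alpha\utc 0$ in $Y$, we have $\rho_j(\abs{y_\alpha}\wedge y)\to 0$, whence $\limsup_\alpha\rho_j(\abs{y_\alpha}\wedge u)\le\varepsilon$. As $\varepsilon>0$ was arbitrary, $\rho_j(\abs{y_\alpha}\wedge u)\to 0$; and as $j\in J$ and $u\in X_+$ were arbitrary, $y_\alpha\utc 0$ in $X$.

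The step I expect to be the main obstacle is the opening reduction. It rests on the fact that a topologically complete locally solid vector lattice with the Lebesgue property is Dedekind complete---so that $Y^{dd}$ is a projection band and Theorem~\ref{Um convergence in sublattices} can be applied---and on checking that passing to $B=Y^{dd}$ preserves everything used afterwards: $B$ is a band in a Dedekind complete space, hence itself Dedekind complete; it inherits the Lebesgue property from $X$, because infima taken in the ideal $B$ coincide with those taken in $X$; and inside $B$ one has $Y^{dd}=B$, so that $I_Y$ really is order dense there. Once these points are secured, the remainder is the routine pseudonorm estimate above.
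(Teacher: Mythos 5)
Your proof is correct, but it routes the key approximation step differently from the paper. The paper chains the three cases of Theorem~\ref{Um convergence in sublattices}: $Y$ is majorizing in the ideal $I(Y)$, $I(Y)$ is $\tau$-dense in its $\tau$-closure, and that closure is a band (by \cite[Thm.~3.7]{Aliprantis:03}) which is a projection band once Dedekind completeness is extracted from completeness plus the Lebesgue property (\cite[Thm.~3.24]{Aliprantis:03}). You share the projection-band reduction (passing to $B=Y^{dd}$ and invoking part~(\ref{projection band sublattice}) of Theorem~\ref{Um convergence in sublattices}), but you replace the topological middle step by an order-theoretic one: $(I_Y)^{dd}=Y^{dd}$ forces $I_Y$ to be order dense in $B$, so $\{v\in I_Y:0\le v\le u\}\uparrow u$, and the Lebesgue property converts this into $\rho_j(u-v)\to 0$, after which the majorizing property of $Y$ in $I_Y$ and the inequality $\abs{y_\alpha}\wedge u\le\abs{y_\alpha}\wedge y+(u-v_0)$ finish the estimate in one stroke. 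The trade-off: the paper's version leans on the cited band-closure theorem and keeps the proof a three-line application of Theorem~\ref{Um convergence in sublattices}, while yours is more self-contained (it only needs the elementary facts that Hausdorff locally solid lattices are Archimedean, that an ideal is order dense iff its double disjoint complement is everything, and that infima computed in an ideal agree with those in the whole space) and makes visible exactly where the Lebesgue property enters beyond supplying Dedekind completeness. All the individual steps you flag as potential obstacles do go through as you describe.
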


\begin{proof}
Suppose  $y_\alpha\utc 0$ in $Y$. By Theorem \ref{Um convergence in sublattices}(\ref{majorizing sblattice}), $y_\alpha\utc 0$ in the ideal $I(Y)$ generated by $Y$ in $X$. 
By Theorem \ref{Um convergence in sublattices}(\ref{tau-dense sublattice}), $y_\alpha\utc 0$ in the closure $\overline{\{I(Y)\}}^\tau$ of $I( Y)$. 
It follows from \cite[Thm. 3.7]{Aliprantis:03} that $\overline{\{I(Y)\}}^\tau$ is a band in $X$. Now, \cite[Thm. 3.24]{Aliprantis:03} 
assures that $X$ is Dedekind complete, and so $\overline{\{I(Y)\}}^\tau$ is a projection band in $X$. 
Then $y_\alpha\utc 0$ in $X$, in view of Theorem \ref{Um convergence in sublattices}(\ref{projection band sublattice}).
\end{proof}

Suppose that $(X,\tau)$ is a locally solid vector lattice possessing the Lebesgue property. Then, in view of \cite[Thms. 3.23 and 3.26]{Aliprantis:03}, 
its topological completion $(\hat{X},\hat{\tau})$ possesses the Lebesgue property as well. Hence, by \cite[Thm. 3.24]{Aliprantis:03}, $\hat{X}$ is Dedekind complete. Since $X\subseteq \hat{X}$, 
there holds $X^\delta\subseteq(\hat{X})^\delta=\hat{X}$. So, $X\subseteq X^\delta\subseteq\hat{X}$. Now, Theorem \ref{u-convergence from a sublattice} assures that, given a net 
$z_\alpha$  in $X^\delta$, if  $z_\alpha\utc 0$ in $X^\delta$ then $z_\alpha\utc 0$ in $\hat{X}$.\\

\section{unbounded relatively uniformly convergence}

In this section we discuss unbounded relatively uniformly convergence. Recall that a net $x_\alpha$ in a vector lattice $X$ is said to be {\em relatively uniformly convergent} to $x \in X$ if, there is $u \in X_+$ such that for any $n \in \mathbb{N}$, there exists $\alpha_n$ satisfying $\abs{x_\alpha - x} \leq \frac{1}{n}u$ for $\alpha \ge \alpha_n$. In this case we write $x_\alpha \ru x$ and the vector $u \in X_+$ is called {\em regulator}, see \cite[Def. III.11.1]{Vulikh67}.

If $x_\alpha\xrightarrow{\mathit{{ru}}}0$ in a locally solid vector lattice $(X,\tau)$, 
then $x_\alpha\xrightarrow{\mathit{{\tau}}}0$. Indeed, let $V$ be a solid neighborhood at zero. Since $x_\alpha\xrightarrow{\mathit{{ru}}}0$, then there is $u\in X_+$ such that, for a given $\varepsilon>0$, there is $\alpha_\varepsilon$ satisfying $\abs{x_\alpha}\leq\varepsilon u$ for all $\alpha\geq\alpha_\varepsilon$.	Since $V$ is absorbing, there is $c\geq 1$ such that $\frac{1}{c}u\in V$. There is some $\alpha_0$ such that $\abs{x_\alpha}\leq\frac{1}{c}u$ for all $\alpha\geq\alpha_0$. Since $V$ is solid and $\abs{x_\alpha}\leq\frac{1}{c}u$ for all $\alpha\geq\alpha_0$, then $x_\alpha\in V$ for all $\alpha\geq\alpha_0$. That is $x_\alpha\xrightarrow{\mathit{{\tau}}}0$.

The following result might be considered as an $ru$-version of Theorem 1 in \cite{DEM1}.

\begin{thm}\label{when ru-convergence is topological}
Let $X$ be a vector lattice. Then the following conditions are equivalent.

$(1)$ There exists a linear topology $\tau$ on $X$ such that, for any net $x_\alpha$ in $X$: $x_\alpha\ru 0$ iff $x_\alpha\xrightarrow{\mathit{{\tau}}}0$. 

$(2)$ There exists a norm $\|\cdot\|$ on $X$ such that, for any net $x_\alpha$ in $X$: $x_\alpha\ru 0$ iff $\|x_\alpha\|\to 0$. 

$(3)$ $X$ has a strong order unit.
\end{thm}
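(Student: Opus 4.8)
The plan is to prove the chain of implications $(3)\Rightarrow(2)\Rightarrow(1)\Rightarrow(3)$, since $(2)\Rightarrow(1)$ is immediate (a norm topology is a linear topology). For $(3)\Rightarrow(2)$: if $X$ has a strong order unit $e$, then $I_e=X$ and the formula $\norm{x}_e=\inf\{r>0:\abs{x}\le re\}$ (already introduced in the preliminaries) defines a lattice norm on all of $X$. I would check directly that $x_\alpha\ru 0$ with some regulator $u$ is equivalent to $\norm{x_\alpha}_e\to 0$: the forward direction follows because $u\le ce$ for some $c>0$ (as $e$ is a strong unit), so $\abs{x_\alpha}\le\tfrac1n u\le\tfrac cn e$ eventually, giving $\norm{x_\alpha}_e\le c/n$ eventually; conversely, if $\norm{x_\alpha}_e\to0$, take $u=e$ as the single regulator and extract, for each $n$, an index $\alpha_n$ beyond which $\norm{x_\alpha}_e<1/n$, i.e. $\abs{x_\alpha}\le\tfrac1n e$. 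This is essentially the observation that $ru$-convergence in an $AM$-space with unit coincides with norm convergence.

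The substance of the theorem is $(1)\Rightarrow(3)$. Assume a linear topology $\tau$ captures $ru$-convergence. The key structural fact to exploit is the way $ru$-convergence ignores "most" directions: for a fixed $u\in X_+$, the principal ideal $I_u$ with its $M$-norm $\norm{\cdot}_u$ sits inside $X$, and any sequence inside $I_u$ that is $\norm{\cdot}_u$-null is $ru$-null in $X$ with regulator $u$, hence $\tau$-null. I would first argue that $\tau$ restricted to each $I_u$ is coarser than (in fact comparable to) the $M$-norm topology, and use a diagonal/sequential argument to locate a contradiction when $X$ has no strong unit. Concretely, suppose $X$ has no strong order unit; I want to produce a net (or sequence) that is $\tau$-null but not $ru$-null, contradicting $(1)$. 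Following the strategy of Theorem 1 in \cite{DEM1}: the absence of a strong unit means that for every $u\in X_+$ there is $x\in X_+$ with $x\not\le nu$ for all $n$; one builds from this an unbounded "sequence of bad directions" and then uses the fact that a single linear topology cannot simultaneously make all the requisite ideal-norm-null sequences converge while keeping a cleverly constructed sequence (whose candidate regulators are all exhausted) away from $0$.

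The cleanest route is likely the following. Since $\tau$ induces $ru$-convergence, the identity map from $(I_u,\norm{\cdot}_u)$ into $(X,\tau)$ is continuous for every $u$; I would then invoke that $(X,\tau)$ being a linear topology with a base at $0$, if it had no strong unit, one could find a decreasing (or suitably spread out) family witnessing that no neighborhood of $0$ in $\tau$ is norm-bounded in any $I_u$ — and conversely every $\tau$-null net must, by the $ru$-description, eventually live in and be small in a single $I_u$. Playing these off: pick a sequence $u_1\le u_2\le\cdots$ in $X_+$ with no upper bound dominating all of them up to scalars (possible precisely when there is no strong unit), choose $x_n\in X_+$ with $x_n\le u_n$ but $x_n$ "large" relative to $u_{n-1}$, scaled so that $x_n\to0$ in $\tau$ — this is where I'd lean on $(1)$ to translate $\tau$-nullness into an $ru$-statement and derive that some fixed $u$ would have to dominate tails of $(x_n)$, contradicting the construction. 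I expect the main obstacle to be exactly this last step: making the "no strong unit $\Rightarrow$ regulators get exhausted" argument rigorous, i.e. showing that a $\tau$-null sequence forces a \emph{single} regulator, which requires care because $ru$-convergence of a sequence does give one regulator by definition, but one must ensure the constructed sequence is genuinely $\tau$-null, and that is the delicate interplay. I would handle it by mimicking the pigeonhole/exhaustion argument of \cite[Thm. 1]{DEM1} adapted to the $ru$ setting, possibly passing to the $AM$-space representation $I_z\cong C(K)$ from the preliminaries to make the domination estimates transparent.
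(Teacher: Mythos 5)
Your $(3)\Rightarrow(2)\Rightarrow(1)$ is exactly what the paper does: with a strong unit $e$ one checks directly that $ru$-convergence coincides with $\norm{\cdot}_e$-convergence, and $(2)\Rightarrow(1)$ is trivial. That part is fine.

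The problem is $(1)\Rightarrow(3)$, which is the entire content of the theorem (the paper disposes of it by citing \cite[Lem.\ 1]{DEM1}). What you offer there is a plan, not a proof, and the plan has a concrete gap. You propose to pick $u_1\le u_2\le\cdots$ with no common dominator, choose $x_n\le u_n$ ``large relative to $u_{n-1}$,'' and scale so that $x_n\to 0$ in $\tau$. But $\tau$ is an arbitrary, unknown linear topology: the only way you can certify that a \emph{sequence} is $\tau$-null is either to exhibit a single regulator (which is exactly what you are trying to show cannot exist --- circular) or to run a diagonal argument against a \emph{countable} neighborhood base at zero, which a general linear topology need not have. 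Choosing $\lambda_n$ with $\lambda_n x_n\in U_n$ for some decreasing sequence of neighborhoods $U_n$ does not make $\lambda_n x_n$ $\tau$-null unless $(U_n)$ is a base. So the ``exhaustion of regulators'' step, which you yourself flag as the delicate point, does not close.

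The argument that actually works avoids sequences altogether and uses a net indexed by the full neighborhood filter. Let $\mathcal{N}_0$ be the neighborhood filter of $0$ for $\tau$ and direct $A:=\mathcal{N}_0\times X_+$ by $(U,u)\ge(V,v)$ iff $U\subseteq V$. For each $u\in X_+$ one has $\frac{1}{n}u\ru 0$ (regulator $u$), hence $\frac{1}{n}u\tc 0$ by $(1)$, so there is $n(U,u)$ with $\frac{1}{n(U,u)}u\in U$. The net $y_{(U,u)}:=\frac{1}{n(U,u)}u$ is $\tau$-null by construction (given $W\in\mathcal{N}_0$, every $(U,u)\ge(W,0)$ has $y_{(U,u)}\in U\subseteq W$), hence $ru$-null by $(1)$: there is a single $w\in X_+$ and an index $(U_0,u_0)$ with $y_{(U,u)}\le w$ for all $(U,u)\ge(U_0,u_0)$. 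Taking $U=U_0$ and $u\in X_+$ arbitrary gives $u\le n(U_0,u)\,w$ for every $u\in X_+$, i.e.\ $w$ is a strong unit. This is presumably the content of the cited lemma; your ideal-by-ideal $AM$-space analysis is not needed. (A minor additional point, shared with the paper: in $(3)\Rightarrow(2)$, $\norm{\cdot}_e$ is a genuine norm only when $X$ is Archimedean.)
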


\begin{proof}
$(1)\Rightarrow(3)$ It follows from \cite[Lem. 1]{DEM1}.

$(3)\Rightarrow(2)$ Let $e\in X$ be a strong order unit. Then $x_\alpha\ru 0$ iff $\|x_\alpha\|_e\to 0$, where $\|x\|_e:=\inf\{r: |x|\le re\}$.

$(2)\Rightarrow(1)$ It is trivial.
\end{proof}

Let $X$ be a vector lattice. A net $x_\alpha$ in $X$ is said to be {\em unbounded relatively uniformly convergent} to $x\in X$ 
if $\abs{x_\alpha-x}\wedge w\xrightarrow{\mathit{{ru}}}0$ for all $w\in X_+$. In this case, we write $x_\alpha\xrightarrow{\mathit{{uru}}}x$.
Clearly, if $x_\alpha\xrightarrow{\mathit{{uru}}}0$ in a locally solid vector lattice $(X,\tau)$, 
then $x_\alpha\xrightarrow{\mathit{{u\tau}}}0$.

In general, $uru$-convergence is also not topological. Indeed, consider the vector lattice $L_1[0,1]$. 
It satisfies the diagonal property for order convergence by \cite[Thm. 71.8]{Riesz Space 1}. 
Now, by combining Theorems 16.3, 16.9, and 68.8 in \cite{Riesz Space 1} we get that for any sequence $f_n$ in $L_1[0,1]$
$f_n\oc 0$ iff $f_n\xrightarrow{\mathit{{ru}}}0$. In particular, $f_n\xrightarrow{\mathit{{uo}}}0$ iff $f_n\xrightarrow{\mathit{{uru}}}0$. 
But the $uo$-convergence in $L_1[0,1]$ is equivalent to $a.e$.-convergence which is not topological, see \cite{Ord66}.

However, in some vector lattices the $uru$-convergence could be topological. For example, if $X$ is a vector lattice with a strong unit $e$, 	
It follows from Theorem \ref{when ru-convergence is topological}, that $ru$-convergence is equivalent to the norm convergence $\lVert\cdot\lVert_e$, 
where $\lVert x\lVert_e\coloneqq\inf\{\lambda>0:\abs{x}\leq\lambda e\}$, $x\in X$. Thus $uru$-convergence in $X$ is topological.

Consider vector lattice $c_{00}$ of eventually zero sequences. 
It is well known that in $c_{00}$:  $x_\alpha\xrightarrow{\mathit{{ru}}}0$ iff $x_\alpha\oc 0$. 
For the sake of completeness we include a proof of this fact. Clearly, $x_\alpha\xrightarrow{\mathit{{ru}}}0\Rightarrow x_\alpha\oc 0$.
For the converse, suppose $x_\alpha\oc 0$ in $c_{00}$. Then there is a net $y_\beta\downarrow 0$ in $c_{00}$ 
such that, for any $\beta$, there is  $\alpha_\beta$ satisfying $\abs{x_\alpha}\leq y_\beta$ for all $\alpha\geq\alpha_\beta$. 
Let $e_n$ denote the sequence of standard unit vectors in $c_{00}$.
Fix $\beta_0$. Then $y_{\beta_0}=c_1^{\beta_0}e_{k_1}+\dots+c_n^{\beta_0}e_{k_n}, \ c_i^{\beta_0}\in\mathbb{R}, i=1,\dots,n$. 
Since $y_\beta$ is decreasing, then $y_\beta\leq y_{\beta_0}$ for all $\beta\geq\beta_0$. So, $y_\beta=c_1^{\beta}e_{k_1}+\dots+c_n^{\beta}e_{k_n}$ 
for all $\beta\geq\beta_0, c_i^{\beta}\in\mathbb{R}, i=1,\dots,n$.
Since $y_\beta\downarrow 0$ then $\lim_{\beta}c_i^{\beta}=0$ for all $i=1,\dots,n$. 
Let $u=e_{k_1}+\dots+e_{k_n}$. Given $\varepsilon>0$. 
Then, there is $\beta_\varepsilon\geq\beta_0$ such that $c_i^\beta<\varepsilon$ for all $\beta\geq\beta_\varepsilon$ for $i=1,\dots,n$.
Consider $y_{\beta_\varepsilon}$ then there is $\alpha_\varepsilon$ such that $\abs{x_\alpha}\leq y_{\beta_\varepsilon}$ for all $\alpha\geq\beta_\varepsilon$. 
But $y_{\beta_\varepsilon}=c_1^{\beta_\varepsilon}e_{k_1}+\dots+c_n^{\beta_\varepsilon}e_{k_n}\leq\varepsilon u$. 
So, $\abs{x_\alpha}\leq\varepsilon u$ for all $\alpha\geq\alpha_{\varepsilon}$. That is $x_\alpha\xrightarrow{\mathit{{ru}}}0$.
Thus, the $uru$-convergence in $c_{00}$ coincides with the $uo$-convergence which is pointwise convergence and, therefore, is topological.

\begin{prop}\label{In Lebesgue and complete metrizable ru is equivalent to tau}
	Let $X$ be Lebesgue and complete metrizable locally solid vector lattice. then
	$x_\alpha\xrightarrow{\mathit{{ru}}}0$ iff $x_\alpha\oc 0$.
\end{prop}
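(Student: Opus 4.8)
The plan is to prove the two implications separately. The implication $x_\alpha\ru 0\Rightarrow x_\alpha\oc 0$ requires nothing beyond the standing hypotheses: if $u\in X_+$ is a regulator, then for each $n\in\mathbb N$ there is $\alpha_n$ with $\abs{x_\alpha}\le\frac1n u$ for all $\alpha\ge\alpha_n$, and since a Hausdorff locally solid vector lattice is Archimedean we have $\frac1n u\downarrow 0$; taking $y_n:=\frac1n u$ (a net indexed by $\mathbb N$) as the dominating net exhibits $x_\alpha\oc 0$. The real content is the converse, and this is where completeness, metrizability, and the Lebesgue property are used.

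The key tool is an extraction lemma that I would prove first: \emph{if $(X,\tau)$ is complete and metrizable and $z_n\tc 0$, then $\abs{z_{n_k}}\le 2^{-k}u$ for all $k$, for a suitable subsequence $(z_{n_k})$ and some $u\in X_+$} (in particular $z_{n_k}\ru 0$). By metrizability we may take a countable, increasing family $(\rho_j)_{j\in\mathbb N}$ of Riesz pseudonorms generating $\tau$. Since $\rho_j(z_n)\to 0$ for every $j$, pick $n_1<n_2<\cdots$ with $\rho_j(z_{n_k})<2^{-2k}$ whenever $j\le k$, and set $s_N:=\sum_{k=1}^{N}2^{k}\abs{z_{n_k}}$. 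Then $\rho_j\bigl(2^{k}\abs{z_{n_k}}\bigr)\le 2^{k}\rho_j(z_{n_k})<2^{-k}$ for $k\ge j$, so $(s_N)_N$ is $\rho_j$-Cauchy for each $j$, hence $\tau$-Cauchy; by completeness $s_N\tc u$ for some $u\in X_+$. Since $(s_N)$ is increasing and $X_+$ is $\tau$-closed, $u\ge s_N\ge 2^{k}\abs{z_{n_k}}$ for all $N\ge k$, i.e. $\abs{z_{n_k}}\le 2^{-k}u$ for every $k$.

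For the converse itself, assume $x_\alpha\oc 0$, witnessed by a net $y_\beta\downarrow 0$ and a map $\beta\mapsto\alpha_\beta$ with $\abs{x_\alpha}\le y_\beta$ for all $\alpha\ge\alpha_\beta$. The Lebesgue property gives $y_\beta\tc 0$. Using a decreasing countable base $(V_k)$ of zero-neighborhoods, choose inductively indices $\beta_1\le\beta_2\le\cdots$ such that $y_\beta\in V_k$ for all $\beta\ge\beta_k$; then $y_{\beta_k}\tc 0$ as $k\to\infty$, because $y_{\beta_k}\in V_k\subseteq V_j$ once $k\ge j$. Applying the extraction lemma to the $\tau$-null sequence $(y_{\beta_k})_k$ produces a subsequence $(y_{\beta_{k_m}})_m$ and some $u\in X_+$ with $y_{\beta_{k_m}}\le 2^{-m}u$. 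Consequently $\abs{x_\alpha}\le y_{\beta_{k_m}}\le 2^{-m}u\le\frac1m u$ for all $\alpha\ge\alpha_{\beta_{k_m}}$, which is exactly $x_\alpha\ru 0$ with regulator $u$.

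I expect the extraction lemma to be the main obstacle, and within it the use of completeness to compress a $\tau$-null sequence into a single regulator through an absolutely $\tau$-summable series: this is the step in which metrizability (to diagonalize over countably many pseudonorms) and completeness (to realize the limit $u$, which must then be recognized as an upper bound of the partial sums via closedness of $X_+$) are both indispensable, while the Lebesgue property enters one step earlier to pass from the order-null controlling net $y_\beta$ into the topology so that the lemma can be applied. A secondary subtlety is that order convergence is not sequential in general, so one cannot simply pass to a subsequence of $y_\beta$; metrizability is used precisely to extract an increasing chain of indices along which $y_\beta$ is $\tau$-null, which is all the argument requires.
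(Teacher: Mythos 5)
Your proof is correct and follows essentially the same route as the paper: use the Lebesgue property to turn the dominating net $y_\beta$ into a $\tau$-null family, use metrizability to select a sequence $\beta_k$, and use completeness to sum the weighted series $\sum_k k\,y_{\beta_k}$ (the paper) or $\sum_k 2^k\abs{z_{n_k}}$ (your extraction lemma) into a single regulator $u$, with closedness of $X_+$ giving $k\,y_{\beta_k}\le u$. Packaging the summation step as a standalone extraction lemma is a presentational difference only; the underlying argument coincides with the paper's.
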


\begin{proof}
	The necessity is obvious. For the sufficiency assume that $x_\alpha\oc 0$.
	Then there exists $y_\beta\downarrow 0$ such that for any $\beta$ there is 
	$\alpha_{\beta}$ with $|x_\alpha|\le y_\beta$ as $\alpha\ge\alpha_{\beta}$.
	Since $d(y_\beta,0)\to 0$, there exists an increasing sequence $(\beta_k)_k$
	of indeces with $d(ky_{\beta_k},0)\le\frac{1}{2^k}$.
	Let $s_n=\sum_{k=1}^n ky_{\beta_k}$. We show the sequence $s_n$ is Cauchy. For $n > m$,
	\begin{align*}
	d(s_n,s_m) = d(s_n-s_m,0)=d\Big(\sum_{k=m+1}^n ky_{\beta_k},0\Big) &\leq \sum_{k=m+1}^n d\big(ky_{\beta_k},0\big)\\ &\leq \sum_{k=m+1}^n \frac{1}{2^k} \to 0, \text{ as } n,m \to \infty.
	\end{align*}
	Since $X$ is complete, then the sequence $s_n$ converges to some $u \in X_+$. That is, $u:=\sum\limits_{k=1}^{\infty}ky_{\beta_k}$. Then
	$$
	  k|x_\alpha|\le ky_{\beta_k}\le u \ \ \ \ (\forall \alpha\ge\alpha_{\beta_k})
	$$ 
	which means that $x_\alpha\xrightarrow{\mathit{{ru}}}0$.
\end{proof}

Let $X=\mathbb{R}^\Omega$ be the vector lattice of all real-valued functions on a set $\Omega$.

\begin{prop}\label{ru-convergence for sequences}
In the vector lattice $X=\mathbb{R}^\Omega$, the following conditions are equivalent$:$

$(1)$ for any net $f_\alpha$ in $X$: $f_\alpha\oc 0$ iff $f_\alpha\xrightarrow{\mathit{{ru}}}0$$;$ 

$(2)$ $\Omega$ is countable.
\end{prop}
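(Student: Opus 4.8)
The plan is to first reduce $(1)$ to the single implication ``$f_\alpha\oc 0\Rightarrow f_\alpha\ru 0$ for every net $f_\alpha$'', because $ru$-convergence always implies $o$-convergence: if $\abs{f_\alpha}\le\frac1n u$ holds eventually for each $n$, then the sequence $z_n:=\frac1n u$ satisfies $z_n\downarrow 0$ in the Archimedean lattice $\mathbb{R}^\Omega$ and dominates $f_\alpha$ eventually, which is exactly $f_\alpha\oc 0$. I would also record at the outset that in $\mathbb{R}^\Omega$ the order is pointwise, so $y_\beta\downarrow 0$ means precisely $y_\beta(\omega)\downarrow 0$ in $\mathbb{R}$ for every $\omega\in\Omega$.

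For $(2)\Rightarrow(1)$, assume $\Omega=\{\omega_1,\omega_2,\dots\}$ is countable and let $f_\alpha\oc 0$, witnessed by $y_\beta\downarrow 0$ with $\abs{f_\alpha}\le y_\beta$ for $\alpha\ge\alpha_\beta$. The key step is a diagonal selection over the directed index set of the $\beta$'s: I would inductively choose an increasing sequence $\beta_1\le\beta_2\le\cdots$ with $y_{\beta_k}(\omega_i)<\frac1k$ for all $i\le k$, which is possible since for each of the finitely many $i\le k$ one has $\inf_\beta y_\beta(\omega_i)=0$, a finite set of indices has an upper bound, and monotonicity of the net $y_\beta$ then propagates the estimate. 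Next I would set $u(\omega_m):=\sup_{k\ge 1}k\,y_{\beta_k}(\omega_m)$ and verify $u(\omega_m)<\infty$: for $k\ge m$ the $k$-th term is $<1$, while for $k<m$ it is at most $(m-1)y_{\beta_1}(\omega_m)$; hence $u\in\mathbb{R}^\Omega_+$. Finally, for each $n$ and each $\alpha\ge\alpha_{\beta_n}$ one has pointwise $\abs{f_\alpha(\omega_m)}\le y_{\beta_n}(\omega_m)\le\frac1n\,u(\omega_m)$ for all $m$, so $\abs{f_\alpha}\le\frac1n u$ eventually, i.e.\ $f_\alpha\ru 0$ with regulator $u$.

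For $\neg(2)\Rightarrow\neg(1)$, suppose $\Omega$ is uncountable and exhibit an $o$-null net that is not $ru$-null. I would take $\Lambda$, the directed set of finite subsets of $\Omega$ ordered by inclusion, and put $f_F:=\mathbbm{1}_{\Omega\setminus F}$. The net $f_F$ is decreasing and at each point $\omega$ its value is $0$ once $F\ni\omega$, so $f_F\downarrow 0$ and hence $f_F\oc 0$. If some regulator $u\in\mathbb{R}^\Omega_+$ existed, then testing the defining inequality of $ru$-convergence at level $n$ against the index $F=F_n\in\Lambda$ gives $u(\omega)\ge n$ for every $\omega\notin F_n$; since $N:=\bigcup_n F_n$ is a countable union of finite sets and $\Omega$ is uncountable, any $\omega_0\in\Omega\setminus N$ would satisfy $u(\omega_0)\ge n$ for all $n$, which is impossible. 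Thus $f_F\not\ru 0$, so $(1)$ fails.

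I expect the main obstacle to be the $(2)\Rightarrow(1)$ direction, specifically carrying out the diagonal choice of the indices $\beta_k$ rigorously over an arbitrary directed set (as opposed to a sequence) and confirming that the candidate regulator $u$ is finite at every point of $\Omega$; the converse direction is a short and explicit construction.
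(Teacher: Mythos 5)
Your proof is correct. The uncountable direction is the paper's argument verbatim: the net $\mathbbm{1}_{\Omega\setminus F}$ over finite subsets $F$ decreases to $0$, and any regulator would have to exceed $n$ off a finite set $F_n$ for every $n$, hence be infinite at any point outside the countable union $\bigcup_n F_n$. In the countable direction you and the paper both reduce to showing that $f_\alpha\oc 0$ implies $f_\alpha\ru 0$ and both rest on weighting coordinates, but the regulator is built differently. The paper takes an eventual order bound $u=(u_n)_n$ of the net and writes the regulator down in closed form as $w=(nu_n)_n$: for fixed $k$ the coordinates $n\ge k$ satisfy $k\abs{x_\alpha^n}\le ku_n\le nu_n$ automatically, and the finitely many coordinates $n<k$ are controlled by coordinatewise convergence. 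You instead run a diagonal extraction on the dominating net $y_\beta$, choosing $\beta_k$ with $y_{\beta_k}(\omega_i)<\tfrac1k$ for $i\le k$ (legitimate over an arbitrary directed index set, by directedness plus monotonicity, as you note), and take $u=\sup_k k\,y_{\beta_k}$ pointwise; countability of $\Omega$ enters through the finiteness of this supremum. Your construction is essentially the same device the paper uses in Proposition~\ref{In Lebesgue and complete metrizable ru is equivalent to tau}, where the analogous series $\sum_k ky_{\beta_k}$ converges by topological completeness rather than by a pointwise estimate. Both routes are sound; the paper's is marginally shorter, while yours makes the role of the dominating net explicit and generalizes more readily to settings where an eventual order bound is awkward to name.
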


\begin{proof}
$(1)\Rightarrow(2)$
Suppose $f_\alpha \oc 0\Leftrightarrow f_\alpha \xrightarrow{\mathit{{ru}}}0$ for any sequence $f_\alpha$ in $X=\mathbb{R}^\Omega$. 
Our aim is to show that $\Omega$ is countable. Assume, in contrary, that $\Omega$ is uncountable. Let $\mathcal{F}(\Omega)$ be the collection of all finite subsets of $\Omega$. 
For each $\alpha\in\mathcal{F}(\Omega)$, put $f_\alpha=\mathcal{X}_\alpha$. Clearly, $f_\alpha\uparrow\one$, where $\one$ denotes the constant function one on $\Omega$. 
Then $\one-f_\alpha\downarrow 0$ or $\one-f_\alpha\oc 0$ in $\mathbb{R}^\Omega$. 
So, there is $0\leq g\in\mathbb{R}^\Omega$ such that, for any $\varepsilon>0$, there exists $\alpha_\varepsilon$ 
satisfying $\one-f_\alpha\leq\varepsilon g$ for all $\alpha\ge\alpha_{\varepsilon}$. 
Let $n\in\mathbb{N}$. Then there is a finite set $\alpha_n\subseteq\Omega$ such that $\one-f_{\alpha_n}\leq\frac{1}{n}g$. 
Consequently,  $g(x)\ge n$ for all $x\in\Omega\setminus\alpha_n$. 
Let $S=\cup_{n=1}^\infty\alpha_n$. Then $S$ is countable and $\Omega\setminus S\neq \emptyset $. 
Moreover, for each $x\in\Omega\setminus S$, we have $g(x)\ge n$ for all $n\in\mathbb{N}$, which is impossible. 

$(2)\Rightarrow(1)$
Suppose that $\Omega$ is countable. So, we may assume that $X=s$, the space of all sequences. Since, from $x_\alpha\xrightarrow{\mathit{{ru}}}0$ always follows that $x_\alpha\oc 0$,
it is enough to show that if $x_\alpha\oc 0$ then $x_\alpha\xrightarrow{\mathit{{ru}}}0$. To see this, let $(x_\alpha^n)_n=x_\alpha\oc 0$.
Then, the net $x_\alpha$ is eventually bounded, say $|x_\alpha|\le u=(u_n)_n\in s$. Take $w:=(nu_n)_n\in s$. We show that 
$x_\alpha\xrightarrow{\mathit{{ru}}}0$ with the regulator $w$.
Let $k \in \mathbb{N}$. Since $x_\alpha \oc 0$, then for each $n \in \mathbb{N}$, $x_\alpha^n \to 0$ in $\mathbb{R}$. Hence, there is $\alpha_k$ such that $k \abs{x_\alpha^1} < u_1$, $k \abs{x_\alpha^2} < u_2$, $\cdots$, $k \abs{x_\alpha^{k-1}} < u_{k-1}$ for all $\alpha \ge \alpha_k$.
Note that for $n \ge k$, $k \abs{x_\alpha^n} < u_n$. Therefore, $k \abs{x_\alpha} < w$ for all $\alpha \ge \alpha_k$.
\end{proof}

It follows from Proposition \ref{ru-convergence for sequences} that, for countable $\Omega$, the $uru$-convergence in $\mathbb{R}^\Omega$ 
coincides with the $uo$-convergence (which is pointwise) and therefore is topological. 
We do not know, whether or not the countability of $\Omega$ is necessary
for the property that $uru$-convergence is topological in $\mathbb{R}^\Omega$.

\section{Topological orthogonal systems and metrizabililty}

A collection $\{e_\gamma\}_{\gamma\in\Gamma}$ of positive vectors in a  vector lattice $X$ is called an {\em orthogonal system} if $e_\gamma\wedge e_{\gamma'}=0$ 
for all $\gamma\neq\gamma'$. If, moreover, $x\wedge e_\gamma=0$ for all $\gamma\in\Gamma$ implies $x=0$, then $\{e_\gamma\}_{\gamma\in\Gamma}$ 
is called a {\em maximal orthogonal system}.
It follows from Zorn's Lemma that every  vector lattice containing at least one non-zero element has a maximal orthogonal system. 
Motivated by Definition III.5.1 in \cite{Schaefer:74}, we introduce the following notion.

\begin{definition}
	Let $(X,\tau)$ be a topological vector lattice. An orthogonal system $Q=\{e_\gamma\}_{\gamma\in\Gamma}$ of non-zero elements in $X_+$ is said to be a 
	{\em topological orthogonal system} if the ideal $I_Q$ generated by $Q$ is $\tau$-dense in $X$.
\end{definition}

\begin{lem}\label{tos implies mos}
	If $Q=\{e_\gamma\}_{\gamma\in\Gamma}$ is a topological orthogonal system in a topological vector lattice $(X,\tau)$, then $Q$ is a maximal orthogonal system in $X$.
\end{lem}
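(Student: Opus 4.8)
The plan is to argue by contradiction: suppose $Q=\{e_\gamma\}_{\gamma\in\Gamma}$ is topological but not maximal. Then there exists $0\neq x\in X_+$ with $x\wedge e_\gamma=0$ for all $\gamma\in\Gamma$. The first step is to promote this disjointness from the generators $e_\gamma$ to the whole ideal $I_Q$: since every element $u\in I_Q$ satisfies $|u|\le\sum_{i=1}^n\lambda_i e_{\gamma_i}$ for suitable scalars and indices, and $x\wedge(\sum_{i}\lambda_i e_{\gamma_i})\le\sum_i x\wedge(\lambda_i e_{\gamma_i})=0$ (using that $x$ is disjoint from each $e_{\gamma_i}$ together with the standard sublattice identity $a\wedge(b+c)\le a\wedge b+a\wedge c$ for positive vectors, plus homogeneity of disjointness in the positive scalar), we get $x\wedge|u|=0$ for all $u\in I_Q$; that is, $x$ is disjoint from $I_Q$, hence $I_Q\subseteq\{x\}^d$.

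The second step uses $\tau$-density of $I_Q$ together with the continuity of lattice operations to conclude $x$ is disjoint from all of $X$. Concretely, pick a net $u_\alpha$ in $I_Q$ with $u_\alpha\tc x$; replacing $u_\alpha$ by $|u_\alpha|\wedge x$ (which still lies in $I_Q$ since $I_Q$ is an ideal, and which still $\tau$-converges to $|x|\wedge x=x$ because $(X,\tau)$ is a topological vector lattice so $\wedge$ and $|\cdot|$ are $\tau$-continuous), we may assume $0\le u_\alpha\le x$. But $u_\alpha\in I_Q\subseteq\{x\}^d$ forces $u_\alpha=u_\alpha\wedge x=0$ for every $\alpha$, so the net is identically zero and its $\tau$-limit is $0$; by Hausdorffness $x=0$, a contradiction. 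Hence $Q$ is maximal.

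The only point needing a little care — the main (mild) obstacle — is justifying that we may reduce to a net trapped between $0$ and $x$: this rests on the fact that in a topological vector lattice the maps $y\mapsto|y|$ and $y\mapsto y\wedge x$ are $\tau$-continuous, which is exactly the local-solidity hypothesis built into $(X,\tau)$ being a topological (equivalently, locally solid) vector lattice. Alternatively one can avoid nets entirely: $\{x\}^d$ is a $\tau$-closed ideal containing the $\tau$-dense ideal $I_Q$, hence $\{x\}^d=X$, so $x\in\{x\}^d$, giving $x=x\wedge x=0$. I would present this second, slicker phrasing as the main line and relegate the disjointness-propagation computation of the first step (which is what shows $I_Q\subseteq\{x\}^d$) to a single sentence.
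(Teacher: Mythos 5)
Your proof is correct and follows essentially the same route as the paper: both take a net in $I_Q$ that $\tau$-converges to $x$, truncate it into $[0,x]$, and kill each term by propagating disjointness from the generators $e_\gamma$ to the whole ideal via $a\wedge(b+c)\le a\wedge b+a\wedge c$. Your alternative phrasing through the $\tau$-closedness of $\{x\}^d$ is just a clean repackaging of the same idea and, if anything, makes explicit the continuity-of-lattice-operations point that the paper's ``without loss of generality, $0\le x_\alpha\le x$'' leaves implicit.
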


\begin{proof}
	Assume $x\wedge e_\gamma=0$ for all $\gamma\in\Gamma$. By the assumption, there is a net $x_\alpha$ in the ideal $I_Q$ such that $x_\alpha\tc x$. 
	Without lost of generality, we may assume $0\leq x_\alpha\leq x$ for all $\alpha$. Since  
	$x_\alpha\in I_Q$, then there are $0<\mu_\alpha\in\mathbb{R}$ and $\gamma_1,~\gamma_2,\dots,~\gamma_n$, 
	such that $0\leq  x_\alpha\leq\mu_\alpha(e_{\gamma_1}+e_{\gamma_2} +\dots + e_{\gamma_n})$. 
	So $0\leq  x_\alpha=x_\alpha\wedge x\leq\mu_\alpha(e_{\gamma_1}+e_{\gamma_2}+\dots+e_{\gamma_n})\wedge x$ 
	$=\mu_\alpha e_{\gamma_1}\wedge x+\dots+\mu_\alpha e_{\gamma_n}\wedge x$ $=0$. 
	Hence $x_\alpha =0$ for all $\alpha$, and so $x=0$.
\end{proof}

We recall the following construction from \cite[p.169]{Schaefer:74}. 
Let $X$ be a  vector lattice and $Q=\{e_\gamma\}_{\gamma\in\Gamma}$ be a maximal orthogonal system of $X$. 
Let ${\mathscr F}(\Gamma)$ denote the collection of all finite subsets of $\Gamma$ ordered by inclusion. 
For each $(n,H)\in\mathbb{N}\times\mathscr F(\Gamma)$ and $x\in X_+$, define
$$
x_{n,H}\coloneqq\sum_{\gamma\in H}x\wedge n e_\gamma.
$$
Clearly $\{x_{n,H}:(n,H)\in\mathbb{N}\times\mathscr F(\Gamma)\}$ is directed upward, and
\begin{equation}\label{star}
x_{n,H}\leq x\quad\text{for all}\quad (n,H)\in\mathbb{N}\times\mathscr F(\Gamma).
\end{equation}
Moreover, Proposition II.1.9 in \cite{Schaefer:74} implies $x_{n,H}\uparrow x$.

\begin{thm}\label{Proposition II.1.9 in Sch}
	Let $Q=\{e_\gamma\}_{\gamma\in\Gamma}$ be an orthogonal system of a locally solid vector lattice $(X,\tau)$, 
	then $Q$ is a topological orthogonal system iff we have $x_{n,H}\tc x$ over $(n,H)\in\mathbb{N}\times\mathscr F(\Gamma)$ for each $x\in X_+$.
\end{thm}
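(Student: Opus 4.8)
The plan is to prove both implications directly. For the forward direction, assume $Q$ is a topological orthogonal system, so the ideal $I_Q$ is $\tau$-dense in $X$. Fix $x \in X_+$; I want to show $x_{n,H} \tc x$ over $(n,H) \in \mathbb{N} \times \mathscr{F}(\Gamma)$. By \eqref{star} we already have $0 \le x - x_{n,H} \le x$, and the net $x - x_{n,H}$ is directed downward (decreasing) since $x_{n,H}$ is directed upward. Fix a Riesz pseudonorm $\rho_j$ from the generating family and $\varepsilon > 0$. By $\tau$-density, choose $v \in I_Q$ with $\rho_j(x - v)$ small — more precisely, since $x \in X_+$ I may replace $v$ by $|v| \wedge x \in I_Q$ (recall $I_Q$ is an ideal), so I may take $0 \le v \le x$ with $\rho_j(x-v) < \varepsilon$. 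Now $v \in I_Q$ means $v \le \mu(e_{\gamma_1} + \dots + e_{\gamma_k})$ for some $\mu > 0$ and finite $H_0 = \{\gamma_1,\dots,\gamma_k\}$; picking $n_0 \in \mathbb{N}$ with $n_0 \ge \mu$ gives $v \le n_0 \sum_{\gamma \in H_0} e_\gamma$, hence (since $0 \le v \le x$) $v = v \wedge x \le \sum_{\gamma \in H_0} x \wedge n_0 e_\gamma = x_{n_0, H_0} \le x_{n,H}$ for all $(n,H) \ge (n_0, H_0)$. Therefore for such $(n,H)$, $0 \le x - x_{n,H} \le x - v$, so by solidity of $\rho_j$, $\rho_j(x - x_{n,H}) \le \rho_j(x - v) < \varepsilon$. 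Since $j$ and $\varepsilon$ were arbitrary, $x_{n,H} \tc x$.

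For the converse, suppose $x_{n,H} \tc x$ over $(n,H)$ for every $x \in X_+$. I must show $I_Q$ is $\tau$-dense in $X$, i.e. every $x \in X$ is a $\tau$-limit of a net from $I_Q$. Since $\tau$ is a locally solid (hence linear) topology and $X$ is a vector lattice, it suffices to approximate positive elements: any $x \in X$ decomposes as $x = x^+ - x^-$ with $x^\pm \in X_+$, and if each is approximated by nets in $I_Q$, so is $x$ by taking differences (using that addition is $\tau$-continuous). For $x \in X_+$, note each $x_{n,H} = \sum_{\gamma \in H} x \wedge n e_\gamma$ lies in $I_Q$, because $x \wedge n e_\gamma \le n e_\gamma$ puts it in the principal ideal generated by $e_\gamma \in Q$. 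So $x_{n,H}$ is a net in $I_Q$ with $x_{n,H} \tc x$, giving $x \in \overline{I_Q}^\tau$. Hence $\overline{I_Q}^\tau = X$, i.e. $Q$ is a topological orthogonal system.

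I should note that in the forward direction I need $Q$ to be a maximal orthogonal system at all, but actually I don't — I only use $\tau$-density of $I_Q$ directly, and the construction $x_{n,H}$ makes sense for any orthogonal system. (Lemma \ref{tos implies mos} tells us a topological orthogonal system is automatically maximal, which is why the hypothesis "$Q$ an orthogonal system" plus the density conclusion is consistent with the earlier development, but that lemma is not logically needed here.) The only mild subtlety, and the step I'd watch most carefully, is the reduction in the forward direction to $0 \le v \le x$: I must confirm that $|v| \wedge x$ still lies in $I_Q$ — it does, since $|v| \in I_Q$ (ideals are solid) and $|v| \wedge x \le |v|$ — and that $\rho_j(x - |v|\wedge x) \le \rho_j(x - v)$, which follows from $|x - |v|\wedge x\,| \le |x - v|$ (a standard lattice inequality, since $|v|\wedge x$ is between $v\wedge x$... more cleanly, $x - |v|\wedge x = (x - |v|)^+ \le |x - |v|\,| \le |x-v|$) together with solidity of the Riesz pseudonorm $\rho_j$. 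Everything else is routine manipulation with finite sums and the defining property of $I_Q$.
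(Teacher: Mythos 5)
Your proof is correct and follows essentially the same route as the paper's: the backward direction is identical, and the forward direction uses the same strategy of picking $0\le v\le x$ in $I_Q$ with $\rho_j(x-v)<\varepsilon$, dominating $v$ by some $x_{n_0,H_0}$, and concluding via $0\le x-x_{n,H}\le x-v$ and solidity. The only cosmetic difference is that where the paper invokes the Riesz decomposition property to get $x\wedge n_0\sum_{\gamma\in H_0}e_\gamma\le\sum_{\gamma\in H_0}x\wedge n_0e_\gamma$, you use the equivalent standard inequality $(a+b)\wedge c\le a\wedge c+b\wedge c$ directly; you also supply the justification (via $|v|\wedge x$) for the approximant satisfying $0\le v\le x$, which the paper merely asserts.
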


\begin{proof}\
	For the backward implication take $x\in X_+$. Since 
	$$
	x_{n,H}=\sum\limits_{\gamma\in H}x\wedge n e_\gamma\leq n\sum\limits_{\gamma\in H}e_\gamma,
	$$ 
	then $x_{n,H}\in I_Q$ for each $(n,H)\in\mathbb{N}\times\mathscr F(\Gamma)$. Also, we have, by assumption, $x_{n,H}\tc x$. 
	Thus, $x\in\overline{I}^\tau_Q$, i.e., $Q$ is a topological orthogonal system of $X$.
	
	For the forward implication, note that $Q$ is a maximal orthogonal system, by Lemma \ref{tos implies mos}. Let $x\in X_+$, and $j\in J$. 
	Given $\varepsilon>0$. Let $V_{\varepsilon,x,j}\coloneqq\{z\in X:\rho_j(z-x)<\varepsilon\}$. 
	Then $V_{\varepsilon,x,j}$ is a neighborhood of $x$ in the $\tau$-topology. Since $I_Q$ is dense in $X$ with respect to the $\tau$-topology, 
	there is $x_\varepsilon\in I_Q$ with $0\leq x_\varepsilon\leq x$ such that $\rho_j(x_\varepsilon-x)<\varepsilon$. 
	Now, $x_\varepsilon\in I_Q$ implies that there are $H_\varepsilon\in\mathscr F(\Gamma)$ and $n_\varepsilon\in\mathbb{N}$ such that
	\begin{equation}\label{3`}
	x_\varepsilon\leq n_\varepsilon\sum_{\gamma\in H_\varepsilon} e_\gamma.
	\end{equation}
	Let
	\begin{equation}\label{4}
	w\coloneqq x  \wedge \sum_{\gamma\in H_\varepsilon} n_\varepsilon e_\gamma.
	\end{equation}
	It follows from $0\leq w\leq\sum\limits_{\gamma\in H_\varepsilon} n_\varepsilon e_\gamma$ and the Riesz decomposition property, that, for each $\gamma\in H_\varepsilon$,
	there exists $y_\gamma$ with
	\begin{equation}\label{5}
	0\leq y_\gamma\leq n_\varepsilon e_\gamma
	\end{equation}
	such that
	\begin{equation}\label{6}
	w=\sum_{\gamma\in H_\varepsilon} y_\gamma.
	\end{equation}
	From (\ref{4}) and (\ref{6}), we have
	\begin{equation}\label{7}
	y_\gamma\leq x ~~~~(\forall\gamma\in H_\varepsilon ).
	\end{equation}
	Also, (\ref{5}) and (\ref{7}) imply that $y_\gamma\leq n_\varepsilon e_\gamma\wedge x$. Now
	\begin{equation}\label{8}
	w=\sum_{\gamma\in H_\varepsilon} y_\gamma\leq\sum_{\gamma\in H_\varepsilon} x \wedge n_\varepsilon e_\gamma = x_{n_\varepsilon, H_\varepsilon}.
	\end{equation} 
	But, from (\ref{3`}) and (\ref{4}), we get
	\begin{equation}\label{9}
	0\leq x_\varepsilon\leq w.
	\end{equation}
	Thus, it follows from (\ref{8}), (\ref{9}), and (\ref{star}), that $0\leq x_\varepsilon\leq x_{n_\varepsilon, H_\varepsilon}\leq x$.
	Hence, $0\leq x-x_{n_\varepsilon, H_\varepsilon}\leq x-x_\varepsilon$ and so $\rho_j(x-x_{n, H})\leq\rho_j(x - x_{n_\varepsilon, H_\varepsilon})\leq\rho_j(x-x_\varepsilon)$ 
	for each $(n,H)\geq ( n_\varepsilon, H_\varepsilon) $. Therefore $x_{n,H}\tc x$.
\end{proof}

The following corollary can be proven easily.
\begin{cor}\label{ut-convergence at q.i.p}
	Let $(X,\tau)$ be a locally solid vector lattice. The following statements are equivalent:
	\begin{enumerate}
		\item\label{qip} $e\in X_+$ is a quasi-interior point;
		\item\label{ut-convergence using q.i.t} for each $x\in X_+,~ x-x\wedge ne\xrightarrow {\mathit{{\tau}}}0$ as $n\rightarrow\infty$.
	\end{enumerate}
\end{cor}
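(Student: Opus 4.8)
The plan is to derive this Corollary directly from Theorem~\ref{Proposition II.1.9 in Sch} by applying that result to the \emph{one-element} orthogonal system $\{e\}$. We may assume $e\neq 0$, since a quasi-interior point is non-zero by definition. Take $\Gamma$ to be a singleton $\{\gamma_0\}$ and set $e_{\gamma_0}:=e$. Then $Q=\{e\}$ is (vacuously) an orthogonal system of non-zero positive elements, and the ideal $I_Q$ generated by $Q$ is precisely the principal ideal $I_e$. Hence, by the very definition of a topological orthogonal system, condition (\ref{qip}) --- that $e$ is a quasi-interior point, i.e.\ that $I_e$ is $\tau$-dense in $X$ --- holds if and only if $Q$ is a topological orthogonal system of $(X,\tau)$.

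Next I would invoke Theorem~\ref{Proposition II.1.9 in Sch}: $Q$ is a topological orthogonal system if and only if $x_{n,H}\tc x$ over $(n,H)\in\mathbb N\times\mathscr F(\Gamma)$ for every $x\in X_+$. Since $\Gamma$ has a single element, $\mathscr F(\Gamma)=\{\emptyset,\Gamma\}$, and for the terminal index $H=\Gamma$ one has simply $x_{n,\Gamma}=x\wedge ne$. As recorded just before Theorem~\ref{Proposition II.1.9 in Sch}, the net $(x_{n,H})$ increases to $x$, so $x-x_{n,H}\downarrow 0$; because $\tau$ admits a base at zero consisting of solid sets, $\tau$-convergence to $0$ of this decreasing net is governed by any of its cofinal subnets, in particular by the one indexed by $\{(n,\Gamma):n\in\mathbb N\}$, along which it equals $x-x\wedge ne$. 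Therefore $x_{n,H}\tc x$ over the full net if and only if $x-x\wedge ne\tc 0$ as $n\to\infty$, which is precisely condition (\ref{ut-convergence using q.i.t}). Chaining the two equivalences yields (\ref{qip})$\Leftrightarrow$(\ref{ut-convergence using q.i.t}).

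This is essentially a bookkeeping argument, so I anticipate no genuine obstacle. The only point that deserves an explicit word is the passage from the net $(x_{n,H})_{(n,H)\in\mathbb N\times\mathscr F(\Gamma)}$ to its cofinal sequence $H=\Gamma$: because the net is monotone and the neighbourhood base at zero consists of solid (hence order-convex) sets, once a single term $x-x\wedge n_0e$ lies in a given solid neighbourhood $V$ of zero, every later term of the full net lies below it in the order and hence also belongs to $V$. With that remark in place the proof is complete.
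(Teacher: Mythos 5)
Your proposal is correct and is exactly the argument the paper intends: the corollary is placed immediately after Theorem~\ref{Proposition II.1.9 in Sch} precisely so that it follows by specializing to the singleton orthogonal system $Q=\{e\}$, for which $I_Q=I_e$ and $x_{n,\Gamma}=x\wedge ne$. Your cofinality remark (using $x_{n,H}\le x$ and the solidity of the neighbourhoods to pass between the full net and the subsequence $H=\Gamma$) correctly fills in the only detail the paper leaves unsaid.
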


\begin{cor}\label{Lebesgue and weak implies q.i.p}
	Let $(X,\tau)$ be a locally solid vector lattice possessing the $\sigma$-Lebesgue property. Then every weak unit in $X$ is a quasi-interior point. 	
\end{cor}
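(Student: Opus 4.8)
The plan is to reduce the statement to Corollary \ref{ut-convergence at q.i.p}, which says that $e \in X_+$ is a quasi-interior point if and only if $x - x \wedge ne \xrightarrow{\tau} 0$ as $n \to \infty$ for every $x \in X_+$. So it suffices to verify this $\tau$-convergence for an arbitrary weak unit $e$ and an arbitrary $x \in X_+$.

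First I would use the characterization of a weak unit: since $e$ is a weak unit, $x \wedge ne \uparrow x$ for every $x \in X_+$. The sequence $(x \wedge ne)_{n \in \mathbb{N}}$ is increasing in $n$, so $\bigl(x - x \wedge ne\bigr)_{n \in \mathbb{N}}$ is decreasing in $n$, and its infimum is $x - \sup_n (x \wedge ne) = x - x = 0$. Hence $x - x \wedge ne \downarrow 0$ as a sequence.

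Next I would invoke the $\sigma$-Lebesgue property: by definition, $x_n \downarrow 0$ in $X$ implies $x_n \xrightarrow{\tau} 0$. Applying this to the sequence $x_n := x - x \wedge ne$ gives $x - x \wedge ne \xrightarrow{\tau} 0$ as $n \to \infty$. Since $x \in X_+$ was arbitrary, condition (\ref{ut-convergence using q.i.t}) of Corollary \ref{ut-convergence at q.i.p} holds, and therefore $e$ is a quasi-interior point.

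There is essentially no serious obstacle here; the only point requiring a moment's care is the observation that the relevant net in Corollary \ref{ut-convergence at q.i.p} is in fact a \emph{sequence} indexed by $n \in \mathbb{N}$, which is exactly what lets us use the $\sigma$-Lebesgue property (rather than needing the full Lebesgue property). The rest is a direct chaining of the weak-unit identity, the monotone decrease to $0$, and the two cited results.
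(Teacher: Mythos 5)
Your proof is correct and follows exactly the same route as the paper: use $x\wedge ne\uparrow x$ from the weak-unit definition, apply the $\sigma$-Lebesgue property to the decreasing sequence $x-x\wedge ne\downarrow 0$, and conclude via Corollary \ref{ut-convergence at q.i.p}. Your explicit note that the relevant net is a sequence (so only the $\sigma$-Lebesgue property is needed) is a helpful clarification that the paper leaves implicit.
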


\begin{proof}
	Let $x\in X^+$, and let $e$ be a weak unit.  Then $x\wedge ne\uparrow x$. So, by the $\sigma$-Lebesgue property,
	we get $x-x\wedge ne\xrightarrow{\mathit{{\tau}}}0$ as $n\rightarrow\infty$.
\end{proof}	

\begin{thm}\label{ut convergence related to TOS}
	Let $(X,\tau)$ be a locally solid vector lattice, and $Q=\{e_\gamma\}_{\gamma\in\Gamma}$ be a topological orthogonal system of $(X,\tau)$. 
	Then $x_\alpha\utc 0$ iff $\abs{x_\alpha}\wedge e_\gamma\tc 0$ for every ${\gamma\in\Gamma}$.
\end{thm}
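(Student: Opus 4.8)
The plan is to dispose of the easy implication in one line and put all the work into the converse. If $x_\alpha \utc 0$, then $\abs{x_\alpha}\wedge w \tc 0$ for every $w\in X_+$ by the very definition of $u\tau$-convergence; taking $w=e_\gamma$ gives the forward direction.

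For the converse, assume $\abs{x_\alpha}\wedge e_\gamma \tc 0$ for each $\gamma\in\Gamma$. Fix $w\in X_+$, $j\in J$, and $\varepsilon>0$; the goal is an eventual bound $\rho_j(\abs{x_\alpha}\wedge w)<2\varepsilon$. First I would upgrade the hypothesis to finite truncated sums: for a finite $H\subseteq\Gamma$ and $n\in\mathbb N$ the vectors $ne_\gamma$ ($\gamma\in H$) are still pairwise disjoint, so $\sum_{\gamma\in H} ne_\gamma=\bigvee_{\gamma\in H} ne_\gamma$, and the lattice distributive law together with $\abs{x_\alpha}\wedge ne_\gamma\le n(\abs{x_\alpha}\wedge e_\gamma)$ (which follows from $\abs{x_\alpha}\le n\abs{x_\alpha}$) gives $0\le\abs{x_\alpha}\wedge\sum_{\gamma\in H} ne_\gamma\le\sum_{\gamma\in H} n(\abs{x_\alpha}\wedge e_\gamma)$. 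Applying the monotone, subadditive Riesz pseudonorm $\rho_j$ yields $\rho_j\bigl(\abs{x_\alpha}\wedge\sum_{\gamma\in H} ne_\gamma\bigr)\le n\sum_{\gamma\in H}\rho_j(\abs{x_\alpha}\wedge e_\gamma)\to 0$, i.e.\ $\abs{x_\alpha}\wedge\sum_{\gamma\in H} ne_\gamma\tc 0$ for every fixed finite $H$ and $n$.

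Next I would invoke Theorem~\ref{Proposition II.1.9 in Sch}: since $Q$ is a topological orthogonal system, $w_{n,H}:=\sum_{\gamma\in H} w\wedge ne_\gamma\tc w$ over $(n,H)\in\mathbb N\times\mathscr F(\Gamma)$, where $0\le w_{n,H}\le w$ and $w_{n,H}\le\sum_{\gamma\in H} ne_\gamma$. Writing $w=w_{n,H}+(w-w_{n,H})$ with both summands positive and using the elementary inequality $c\wedge(a+b)\le c\wedge a+b$ for $a,b,c\in X_+$ (which comes from $c\wedge(a+b)-b=(c-b)\wedge a\le c\wedge a$) together with monotonicity of $\wedge$, I get $\abs{x_\alpha}\wedge w\le\abs{x_\alpha}\wedge\sum_{\gamma\in H} ne_\gamma+(w-w_{n,H})$. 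Now choose $(n_0,H_0)$ with $\rho_j(w-w_{n_0,H_0})<\varepsilon$, and then, using the previous paragraph with this fixed pair, choose $\alpha_0$ with $\rho_j(\abs{x_\alpha}\wedge\sum_{\gamma\in H_0} n_0 e_\gamma)<\varepsilon$ for all $\alpha\ge\alpha_0$; monotonicity and subadditivity of $\rho_j$ then give $\rho_j(\abs{x_\alpha}\wedge w)<2\varepsilon$ for $\alpha\ge\alpha_0$. As $j$ and $\varepsilon$ were arbitrary, $\abs{x_\alpha}\wedge w\tc 0$; as $w\in X_+$ was arbitrary, $x_\alpha\utc 0$.

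The only subtlety — and where I would be careful — is the order of the quantifiers at the end: one must first freeze a single pair $(n_0,H_0)$ taming the $\tau$-tail of $w-w_{n,H}$, and only afterward use the genuine net convergence in $\alpha$ of the finite truncation $\abs{x_\alpha}\wedge\sum_{\gamma\in H_0} n_0 e_\gamma$. I do not foresee a real obstacle: the disjointness and distributivity identities and the pseudonorm estimates are routine, and Theorem~\ref{Proposition II.1.9 in Sch} already carries the burden of linking an arbitrary positive $w$ to the orthogonal system $Q$.
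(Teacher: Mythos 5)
Your proposal is correct and follows essentially the same route as the paper's own proof: both directions are handled identically, the converse rests on Theorem~\ref{Proposition II.1.9 in Sch} via the decomposition $\abs{x_\alpha}\wedge w\le (w-w_{n,H})+\abs{x_\alpha}\wedge\sum_{\gamma\in H}ne_\gamma$, and the quantifier order (first freeze $(n_0,H_0)$, then pass to the tail in $\alpha$) matches the paper's choice of $(n_\varepsilon,H_\varepsilon)$ followed by $\alpha_{\varepsilon,H_\varepsilon}$. The only cosmetic difference is that you bound $\rho_j$ of the finite sum by subadditivity directly where the paper distributes $\abs{x_\alpha}\wedge\sum_{\gamma\in H}e_\gamma$ over the disjoint $e_\gamma$ first; the estimates are the same.
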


\begin{proof}
	The forward implication is trivial. For the backward implication, assume $\abs{x_\alpha}\wedge e_\gamma\tc 0$ for every ${\gamma\in\Gamma} $.
	Let $u\in X_+$, $j\in J$. Fix $\varepsilon>0$. We have
	\begin{align*}
	\abs{x_\alpha}\wedge u&=\abs{x_\alpha}\wedge (u-u_{n,H}+u_{n,H})\\
	&\leq\abs{x_\alpha}\wedge (u-u_{n,H} )+\abs{x_\alpha}\wedge u_{n,H}\\
	&\leq (u-u_{n,H})+\abs{x_\alpha}\wedge\sum\limits_{\gamma\in H}u \wedge ne_\gamma\\
	&\leq (u-u_{n,H})+\abs{x_\alpha}\wedge\sum\limits_{\gamma\in H} ne_\gamma\\
	&\leq (u-u_{n,H})+n\big(\abs{x_\alpha}\wedge\sum\limits_{\gamma\in H} e_\gamma\big)\\
	&=(u-u_{n,H})+n\sum\limits_{\gamma\in H}\abs{x_\alpha} \wedge e_\gamma.
	\end{align*}

	Now, Theorem \ref{Proposition II.1.9 in Sch} assures that $u_{n,H}\tc u$, and so, there exists $(n_\varepsilon, H_\varepsilon)\in\mathbb{N}\times\mathscr F(\Gamma)$ such that
	\begin{equation}\label{11}
	\rho_j(u-u_{n_\varepsilon, H_\varepsilon})<\varepsilon.
	\end{equation}
	Thus, $\abs{x_\alpha}\wedge u\leq u-u_{n_\varepsilon, H_\varepsilon}+\sum\limits_{\gamma\in H_\varepsilon}n_\varepsilon(e_\gamma\wedge\abs{x_\alpha})$. 
	But, by the assumption, $e_\gamma\wedge\abs{x_\alpha}\tc 0$ for all $\gamma\in\Gamma$, and so $n_\varepsilon(e_\gamma\wedge\abs{x_\alpha})\tc 0$. 
	Hence, there is $\alpha_{\varepsilon,H_\varepsilon}$ such that
	\begin{equation}\label{12}
	\rho_j\big(n_\varepsilon(e_\gamma\wedge\abs{x_\alpha})\big)<\frac{\varepsilon}{\abs{H_\varepsilon}} \ \ \ \ (\forall\alpha\geq\alpha_{\varepsilon,H_\varepsilon}, \ \forall\gamma\in H_\varepsilon).
	\end{equation}
	Here $\abs{H_\varepsilon}$ denotes the cardinality of $H_\varepsilon$. 
	For $\alpha\geq\alpha_{\varepsilon,H_\varepsilon}$, we have
	\begin{align*}
	\rho_j(\abs{x_\alpha}\wedge u)&\leq\rho_j(u-u_{n_\varepsilon, H_\varepsilon})+\rho_j\big(n_\varepsilon\sum\limits_{\gamma\in H_\varepsilon} \abs{x_\alpha} \wedge e_\gamma\big)\\
	&\leq\varepsilon+\sum\limits_{\gamma\in H_\varepsilon}\rho_j\big(n_\varepsilon(e_\gamma\wedge\abs{x_\alpha})\big)
	<\varepsilon+\sum\limits_{\gamma\in H_\varepsilon}\frac{\varepsilon}{\abs{H_\varepsilon}}=2\varepsilon,
	\end{align*}
	where the second inequality follows from \eqref{11} and the third one from \eqref{12}. 
	Therefore, $\rho_j(\abs{x_\alpha}\wedge u)\to 0$, and so $x_\alpha\utc 0$.
\end{proof}

The following corollary is immediate.

\begin{cor}\label{ut convergence by qip}
	Let $(X,\tau)$ be a locally solid  vector lattice, and $e\in X_+$ be a quasi-interior point. Then $x_\alpha\utc 0$ iff $\abs{x_\alpha}\wedge e\tc 0$.
\end{cor}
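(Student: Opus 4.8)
The plan is to deduce this directly from Theorem~\ref{ut convergence related to TOS} by applying it to the singleton $Q=\{e\}$. First I would observe that $Q=\{e\}$ is vacuously an orthogonal system of non-zero positive vectors: the condition $e_\gamma\wedge e_{\gamma'}=0$ for $\gamma\neq\gamma'$ is empty when $\Gamma$ has one element, and $e\neq 0$ by hypothesis. Next I would identify the ideal $I_Q$ generated by $Q$ with the principal ideal $I_e$. Since, by definition, $e$ being a quasi-interior point means precisely that $I_e$ is $\tau$-dense in $X$, it follows that $Q=\{e\}$ is a topological orthogonal system of $(X,\tau)$.

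With this in hand, the statement is immediate: Theorem~\ref{ut convergence related to TOS}, read with $\Gamma$ a one-point index set, asserts that $x_\alpha\utc 0$ if and only if $\abs{x_\alpha}\wedge e_\gamma\tc 0$ for every $\gamma\in\Gamma$, which here is the single condition $\abs{x_\alpha}\wedge e\tc 0$. That finishes the proof.

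There is essentially no obstacle; the only point that deserves to be spelled out is the identification $I_{\{e\}}=I_e$ together with the remark that ``$e$ is a quasi-interior point'' is, by definition, the $\tau$-density of $I_e$, so that the hypothesis of Theorem~\ref{ut convergence related to TOS} is satisfied. If one preferred to avoid the topological-orthogonal-system formalism, one could instead repeat the argument of Theorem~\ref{ut convergence related to TOS} verbatim with the net $u_{n,H}$ replaced by the sequence $u\wedge ne$, using Corollary~\ref{ut-convergence at q.i.p} to obtain $u\wedge ne\tc u$; but invoking Theorem~\ref{ut convergence related to TOS} is the shorter route and is what I would write.
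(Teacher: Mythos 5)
Your proposal is correct and is exactly the argument the paper intends: the corollary is stated as an immediate consequence of Theorem~\ref{ut convergence related to TOS}, obtained by taking $Q=\{e\}$, which is a topological orthogonal system precisely because $e$ being a quasi-interior point means $I_e=I_Q$ is $\tau$-dense. Nothing is missing.
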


Recall that a topological vector space is metrizable iff it has a countable neighborhood base at zero, \cite[Thm. 2.1]{Aliprantis:03}.
In particular, a locally solid vector lattice $(X,\tau)$ is metrizable iff its topology $\tau$ is generated by a countable family $(\rho_k)_{k\in\mathbb{N}}$ 
of Riesz pseudonorms. The following result gives a sufficient condition for the metrizabililty of $u\tau$-topology.
	
\begin{prop}\label{metrizable if has countable t.o.s}
Let $(X,\tau)$ be a complete metrizable locally solid vector lattice. If $X$ has a countable topological orthogonal system, then  the $u\tau$-topology is metrizable.
\end{prop}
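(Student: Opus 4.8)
The plan is to produce a countable neighbourhood base at zero for the $u\tau$-topology, whence metrizability follows from the criterion recalled just before the statement (a topological vector space is metrizable iff it has a countable base at zero). Since $(X,\tau)$ is metrizable, fix a countable family $(\rho_k)_{k\in\mathbb N}$ of Riesz pseudonorms generating $\tau$, together with a subadditive translation-invariant metric $d$ inducing $\tau$; write the given countable topological orthogonal system as $Q=\{e_m\}_{m\in\mathbb N}$.

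The first step is to compress $Q$ into a single quasi-interior point. By continuity of scalar multiplication, choose $c_m\in(0,1]$ with $d(c_me_m,0)<2^{-m}$ and put $s_n:=\sum_{k=1}^{n}c_ke_k$. Exactly as in the proof of Proposition~\ref{In Lebesgue and complete metrizable ru is equivalent to tau}, $d(s_n,s_m)\le\sum_{k=m+1}^{n}2^{-k}\to 0$, so $(s_n)$ is $d$-Cauchy and, by completeness, converges to some $e\in X_+$ with $e\ge c_1e_1>0$. As $s_n\ge c_me_m$ for all $n\ge m$ and the cone $X_+$ is $\tau$-closed, we get $e\ge c_me_m$, hence $e_m\in I_e$ for every $m$; therefore $I_Q\subseteq I_e$, and since $I_Q$ is $\tau$-dense, so is $I_e$. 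Thus $e$ is a quasi-interior point.

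Next I would verify that the countable collection $\mathcal B:=\{V_{1/q,e,k}:q,k\in\mathbb N\}$, together with its finite intersections, is a base at zero for the $u\tau$-topology. Each member of $\mathcal B$ is a $u\tau$-neighbourhood of zero, so it suffices to show that every basic $u\tau$-neighbourhood $V_{\varepsilon,w,j}$ contains some member of $\mathcal B$ (finite intersections of the $V_{\varepsilon,w,j}$ being then covered by the corresponding finite intersections of members of $\mathcal B$). Given $\varepsilon>0$, $j\in\mathbb N$ and $0\ne w\in X_+$: since $e$ is a quasi-interior point, Corollary~\ref{ut-convergence at q.i.p} gives $w-w\wedge ne\tc 0$, so pick $n_0\in\mathbb N$ with $\rho_j(w-w\wedge n_0e)<\varepsilon/2$. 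From
\[
  \abs{x}\wedge w\le(w-w\wedge n_0e)+\abs{x}\wedge n_0e\le(w-w\wedge n_0e)+n_0\bigl(\abs{x}\wedge e\bigr),
\]
together with $\rho_j(n_0z)\le n_0\rho_j(z)$, we get $\rho_j(\abs{x}\wedge w)\le\varepsilon/2+n_0\rho_j(\abs{x}\wedge e)$ for all $x$; choosing $q\in\mathbb N$ with $1/q\le\varepsilon/(2n_0)$ then yields $V_{1/q,e,j}\subseteq V_{\varepsilon,w,j}$.

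Consequently $\mathcal B$ and its finite intersections form a countable base at zero for the $u\tau$-topology, which is therefore metrizable. The main obstacle is the construction of the quasi-interior point $e$: this is precisely where completeness (to sum the rescaled series), metrizability (to have a metric witnessing that the partial sums are Cauchy) and the countability of the orthogonal system are all genuinely used; the remaining estimate only repeats the computation already carried out in the proofs of Theorem~\ref{ut convergence related to TOS} and Corollary~\ref{ut-convergence at q.i.p}.
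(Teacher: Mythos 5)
Your proof is correct, but it follows a genuinely different route from the paper's. The paper does not pass through a quasi-interior point at all: it directly defines the countable double series
$d(x,y)=\sum_{n=1}^\infty 2^{-n}\sum_{k=1}^\infty 2^{-k}\,\rho_k(\abs{x-y}\wedge e_n)\big/\bigl(1+\rho_k(\abs{x-y}\wedge e_n)\bigr)$,
checks that this is a metric (maximality of the topological orthogonal system gives the separation axiom), and invokes Theorem~\ref{ut convergence related to TOS} to see that $d$ induces the $u\tau$-topology; notably, topological completeness is never used in that argument. You instead spend the completeness hypothesis (together with metrizability, via a translation-invariant metric, which is automatically subadditive at zero) to sum the rescaled series $\sum_m c_m e_m$ into a single element $e$, observe that $X_+$ is $\tau$-closed so $e\ge c_m e_m$ and hence $I_Q\subseteq I_e$ is dense, and conclude that $e$ is a quasi-interior point; the countable base $\{V_{1/q,e,k}\}$ then does the rest. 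What your approach buys is the stronger intermediate fact that a complete metrizable locally solid vector lattice with a countable topological orthogonal system has a quasi-interior point, and a fully quantitative neighbourhood-containment argument ($V_{1/q,e,j}\subseteq V_{\varepsilon,w,j}$) rather than an identification of convergent nets — a point the paper glosses over with ``it is easy to see.'' What it costs is the use of completeness, which the paper's direct construction shows to be superfluous for the conclusion. Both arguments are sound; yours is slightly less economical in hypotheses but more informative in its intermediate output.
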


\begin{proof}
		First note that, since $(X,\tau)$ is metrizable, $\tau$ is generated by a countable family $(\rho_k)_{k\in\mathbb{N}}$ of Riesz pseudonorms.
		
		Now suppose $(e_n)_{n\in\mathbb{N}}$ to be a topological orthogonal system. 
		For each $n\in\mathbb{N}$, put $d_n(x,y)\coloneqq\sum\limits_{k=1}^\infty\frac{1}{2^k}\frac{\rho_k(\abs{x-y}\wedge e_n)}{1+\rho_k(\abs{x-y}\wedge e_n)}$. 
		Note that each $d_n$ is a semimetric, and $d_n(x,y)\leq 1$ for all $x,y\in X$. If $d_n(x,y)=0$, then $\rho_k(\abs{x-y}\wedge e_n)=0$ for all $k\in\mathbb{N}$, so $(\abs{x-y}\wedge e_n)=0$. 
		For $x,y\in X$, let $d(x,y)\coloneqq\sum\limits_{n=1}^\infty\frac{1}{2^n}d_n(x,y)$. 
		Clearly, $d(x,y)$ is nonnegative and satisfies the triangle inequality, and $d(x,y)=d(y,x)$ for all $x,y\in X$. 
		Now $d(x,y)=0$ iff $d_n(x,y)=0$ for all $n\in\mathbb{N}$ iff $\rho_k(\abs{x-y}\wedge e_n)=0$ for all $k\in\mathbb{N}$ iff $(\abs{x-y}\wedge e_n)=0$ for all $n\in\mathbb{N}$ iff $\abs{x-y}=0$ iff $x=y$. 
		Thus $(X,d)$ is a metric space. 
		Finally, it is easy to see from Theorem \ref{ut convergence related to TOS} that $d$ generates the $u\tau$-topology.
\end{proof}
	
Recall that a topological space $X$ is called {\em submetrizable} if its topology is finer that some metric topology on $X$.	

\begin{prop}\label{weak unit implies submetrizable}
Let $(X,\tau)$ be a metrizable locally solid vector lattice. If $X$ has a weak unit, then the $u\tau$-topology is submetrizable.
\end{prop}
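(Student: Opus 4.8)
The plan is to exhibit an explicit metric $d$ on $X$ whose topology is coarser than the $u\tau$-topology; $d$ will be manufactured from a \emph{single} weak unit together with a countable generating family of Riesz pseudonorms, and the hypothesis that $X$ carries a weak unit is used exactly to make $d$ separate points. Throughout, write $\tau_{u\tau}$ for the $u\tau$-topology.

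First I would invoke metrizability of $(X,\tau)$ to fix a countable family $(\rho_k)_{k\in\mathbb{N}}$ of Riesz pseudonorms generating $\tau$, and fix a weak unit $e\in X_+$. Define, for $x,y\in X$,
$$
  d(x,y):=\sum_{k=1}^{\infty}\frac{1}{2^{k}}\,\frac{\rho_k(\abs{x-y}\wedge e)}{1+\rho_k(\abs{x-y}\wedge e)}.
$$
Each summand lies in $[0,2^{-k}]$, so the series converges and $0\le d(x,y)\le 1$; symmetry is immediate. For the triangle inequality I would use $(a+b)\wedge c\le a\wedge c+b\wedge c$ for $a,b,c\in X_+$, which gives $\abs{x-z}\wedge e\le\abs{x-y}\wedge e+\abs{y-z}\wedge e$; applying monotonicity and subadditivity of each $\rho_k$, then the fact that $t\mapsto t/(1+t)$ is nondecreasing and subadditive on $[0,\infty)$, and finally summing over $k$, yields $d(x,z)\le d(x,y)+d(y,z)$. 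So $d$ is at least a pseudometric.

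The key point is positive definiteness, and this is where the hypothesis really enters. Suppose $d(x,y)=0$. Then $\rho_k(\abs{x-y}\wedge e)=0$ for every $k$, and since the family $(\rho_k)$ is separating this forces $\abs{x-y}\wedge e=0$. Because $e$ is a weak unit we have $\abs{x-y}\wedge ne\uparrow\abs{x-y}$; but $\abs{x-y}\wedge ne\le n(\abs{x-y}\wedge e)=0$ for every $n$, hence $\abs{x-y}=0$, i.e. $x=y$. Thus $(X,d)$ is a genuine metric space; let $\tau_d$ be its topology.

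It remains to verify $\tau_d\subseteq\tau_{u\tau}$, equivalently that the identity map $(X,\tau_{u\tau})\to(X,\tau_d)$ is continuous, for which it suffices to show it preserves convergence of nets. If $x_\alpha\utc x$, then, since $u\tau$-convergence respects linear operations, $x_\alpha-x\utc 0$, and in particular $\abs{x_\alpha-x}\wedge e\tc 0$, i.e. $\rho_k(\abs{x_\alpha-x}\wedge e)\to 0$ in $\mathbb{R}$ for every $k$. Splitting the defining series into $\sum_{k\le N}+\sum_{k>N}$ and bounding the tail by $\sum_{k>N}2^{-k}$, a routine $\varepsilon/2$-argument gives $d(x_\alpha,x)\to 0$. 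Hence the identity is continuous, $\tau_d$ is a metric topology coarser than the $u\tau$-topology, and the $u\tau$-topology is submetrizable. I expect no genuine obstacle here; the only step that truly needs the hypothesis — and so deserves care — is the positive-definiteness argument, where the weak-unit property is precisely what upgrades $\abs{x-y}\wedge e=0$ to $\abs{x-y}=0$.
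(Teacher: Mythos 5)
Your proposal is correct and follows essentially the same route as the paper: the same metric $d(x,y)=\sum_k 2^{-k}\rho_k(\abs{x-y}\wedge e)/(1+\rho_k(\abs{x-y}\wedge e))$ built from a countable generating family of Riesz pseudonorms and the weak unit $e$, with the weak-unit hypothesis used precisely to upgrade $\abs{x-y}\wedge e=0$ to $x=y$. You supply somewhat more detail than the paper (the lattice inequality for the triangle inequality, and the tail-splitting argument for continuity of the identity map), but the structure and key ideas are identical.
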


\begin{proof}
Note that, since $(X,\tau)$ is metrizable, then $\tau$ is generated by a countable family $(\rho_k)_{k\in\mathbb{N}}$ of Riesz pseudonorms.
		 
Suppose that $e\in X_+$ is a weak unit. Put $d(x,y)\coloneqq\sum\limits_{k=1}^\infty\frac{1}{2^k}\frac{\rho_k(\abs{x-y}\wedge e)}{1+\rho_k(\abs{x-y}\wedge e)}$. 
Note that $d(x,y)=0$ iff $\rho_k(\abs{x-y}\wedge e)=0$ for all $k\in\mathbb{N}$ iff $\abs{x-y}\wedge e=0$ and, since $e$ is a weak unit, $x=y$. 
It can easily be shown that $d$ satisfies the triangle inequality. 
Assume $x_\alpha\utc x$. Then, for all $u\in X_+$, $\rho_k(\abs{x-y}\wedge u)\to 0$ for all $k\in\mathbb{N}$. In particular, $\rho_k(\abs{x-y}\wedge e)\to 0$ for all $k\in\mathbb{N}$. Then in a similar argument to \cite[p.200]{Vulikh67}, it can be shown that $x_\alpha\dc x$. 
Therefore, the $u\tau$-topology is finer than the metric topology generated by $d$, and hence $u\tau$-topology is submetrizable.
\end{proof}

We do not know whether the converse of propositions \ref{metrizable if has countable t.o.s}, and \ref{weak unit implies submetrizable} is true or not.

\section{Unbounded $\tau$-Completeness}

A subset $A$ of a locally solid vector lattice $(X,\tau)$ is said to be {\em $($sequentially$)$ $u \tau$-complete} if, it is (sequentially) complete in the $u \tau$-topology.
In this section, we relate sequential $u \tau$-completeness of subsets of $X$ with the Lebesgue and Levi properties. First, we remind the following theorem.
	
\begin{thm}\cite[Thm. 1]{Wnuk} \label{Wunk1}
If $( X,\tau)$ is a locally solid vector lattice, then the following statements are equivalent$:$ 
		\begin{enumerate}
			\item $(X,\tau)$ has the Lebesgue and Levi properties$;$
			\item $X$ is $\tau$-complete, and $c_0$ is not lattice embeddable in $(X,\tau)$.
		\end{enumerate}
\end{thm}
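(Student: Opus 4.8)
The plan is to prove the equivalence $(1)\Leftrightarrow(2)$ by treating the two implications in turn. The recurring tools will be: the topological completion $(\hat X,\hat\tau)$, which again has the Lebesgue property, satisfies $\hat X_+=\overline{X_+}^{\hat\tau}$, and in which $X$ sits as an order dense regular sublattice (all standard consequences of the Lebesgue property, used already in the discussion following Theorem~\ref{u-convergence from a sublattice}); the fact that order intervals are $\tau$-bounded in any locally solid vector lattice; and the observation that the positive cone is $\tau$-closed, so that $z_\alpha\ge 0$ together with $z_\alpha\tc z$ forces $z\ge 0$.

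For $(1)\Rightarrow(2)$, I would first establish $\tau$-completeness. Let $x_\alpha$ be a $\tau$-Cauchy net; since the lattice operations are $\tau$-uniformly continuous we may take $x_\alpha\ge 0$, and in $\hat X$ we have $x_\alpha\thc\hat x$ with $\hat x\in\hat X_+$. Consider the upward directed set $D=\{x\in X:0\le x\le\hat x\}$; order density of $X$ in $\hat X$ gives $\sup_{\hat X}D=\hat x$, while $D\subseteq[0,\hat x]_{\hat X}$ is $\hat\tau$-bounded, hence $\tau$-bounded in $X$. By the Levi property, $x_0:=\sup_X D$ exists in $X$, and since $X$ is a regular sublattice of $\hat X$ we get $\sup_{\hat X}D=x_0$, whence $\hat x=x_0\in X$. (Alternatively one may invoke the Aliprantis--Burkinshaw fact that, in the presence of the Lebesgue property, the Levi property is equivalent to $\tau$-completeness.) Next, suppose $c_0$ were lattice embeddable in $(X,\tau)$, say $T\colon c_0\to Y\subseteq X$ is a topological lattice isomorphism onto a sublattice, and put $s_n=T(e_1)\vee\cdots\vee T(e_n)$. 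Then $s_n\uparrow$ and $\{s_n\}$ is $\tau$-bounded (the image of a norm-bounded set under a topological embedding), so by the Levi property $s_n\uparrow x_0\in X$; hence $x_0-s_n\downarrow 0$ and the Lebesgue property gives $s_n\tc x_0$, so $(s_n)$ is $\tau$-Cauchy in $X$, thus Cauchy in $Y$, and transporting by $T^{-1}$ the sequence $(e_1+\cdots+e_n)$ is Cauchy in $c_0$; this is absurd. So $c_0$ is not lattice embeddable.

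For $(2)\Rightarrow(1)$ everything funnels through the following lemma: \emph{in a locally solid vector lattice, if $0\le y_\alpha\uparrow$ is $\tau$-bounded but not $\tau$-Cauchy, then $c_0$ is lattice embeddable.} Granting it, let $X$ be $\tau$-complete with no lattice copy of $c_0$. If $0\le y_\alpha\uparrow$ is $\tau$-bounded, the lemma forces $(y_\alpha)$ to be $\tau$-Cauchy, hence $\tau$-convergent to some $y$ by completeness, and the closed-cone argument identifies $y$ with $\sup_X y_\alpha$; this is the Levi property, and in fact it shows that every $\tau$-bounded increasing net in $X_+$ is $\tau$-convergent to its supremum. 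For the Lebesgue property, let $x_\alpha\downarrow 0$; fixing $\alpha_0$ and passing to the tail we may assume $x_\alpha\le u:=x_{\alpha_0}$, so $0\le u-x_\alpha\uparrow$ is order bounded, hence $\tau$-bounded, hence $\tau$-convergent to $\sup(u-x_\alpha)=u$; therefore $x_\alpha=u-(u-x_\alpha)\tc 0$.

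The hard part is the lemma; it is precisely the substance of Wnuk's theorem, a locally solid analogue of the Meyer-Nieberg--Lozanovsky description of $KB$-spaces. The scheme I would follow: since $(y_\alpha)$ is not $\tau$-Cauchy, using solidity of a basic neighbourhood one extracts $\alpha_1\le\alpha_2\le\cdots$ and, after a pigeonhole passage to a subsequence, a single Riesz pseudonorm $\rho=\rho_j$ and $\varepsilon>0$ with $d_k:=y_{\alpha_{k+1}}-y_{\alpha_k}\ge 0$, $\rho(d_k)\ge\varepsilon$, while $\sum_{k=1}^{n}d_k=y_{\alpha_{n+1}}-y_{\alpha_1}$ stays $\tau$-bounded, so $\sup_n\rho\bigl(\sum_{k\le n}d_k\bigr)<\infty$ (by the Proposition at the start of Section~1). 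One then disjointifies: using that $\tau$-completeness entails uniform completeness and working inside the principal ideals $I_{d_1\vee\cdots\vee d_n}\cong C(K)$, one replaces $(d_k)$ by a pairwise disjoint sequence $(f_k)$ in $X_+$ with $\rho(f_k)\ge\varepsilon'>0$ and $\rho\bigl(\bigvee_{k\in A}f_k\bigr)$ bounded uniformly over finite sets $A$ (the finite suprema exist by disjointness, and bounding them uniformly is where the partial-sum estimate is spent). Disjointness then yields, for $(a_k)\in c_{00}$, two-sided estimates of the form $c\,\varepsilon'\norm{(a_k)}_\infty\le\rho\bigl(\sum_k a_kf_k\bigr)\le c'\norm{(a_k)}_\infty$ and analogous bounds for the remaining pseudonorms, so $e_k\mapsto f_k$ extends to a map $c_0\to X$ that is a lattice isomorphism onto its range and a homeomorphism. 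The two delicate points — keeping the pseudonorms bounded \emph{below} throughout the disjointification (this is where $\tau$-completeness, equivalently uniform completeness, is genuinely used and where failure of the Cauchy condition is exploited) and verifying that the embedding is topological rather than merely algebraic — are the crux of the whole theorem.
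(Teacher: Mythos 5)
The paper offers no proof of this statement at all: it is quoted as Theorem~1 of Wnuk's paper \cite{Wnuk}, so there is nothing internal to compare your argument with. Your outline follows what is essentially the known route (a Nakano-type argument for completeness in $(1)\Rightarrow(2)$, the Levi--Lebesgue argument to exclude a lattice copy of $c_0$, and a disjoint-sequence construction for $(2)\Rightarrow(1)$), and the reductions you perform around your central lemma are correct. Two points, however, need repair. First, the parenthetical claim that, in the presence of the Lebesgue property, the Levi property is \emph{equivalent} to $\tau$-completeness is false, and the counterexample is $c_0$ itself: it has order continuous norm and is norm complete, yet $e_1+\dots+e_n$ is an increasing, norm-bounded sequence with no supremum. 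Only the implication Lebesgue $+$ Levi $\Rightarrow$ complete is available, which is what your main (order-density) argument establishes; keep that argument and delete the ``equivalently''. Second, your italicized lemma is stated without completeness, and in that generality it is false: $c_{00}$ with the supremum norm contains the bounded, increasing, non-Cauchy sequence $e_1+\dots+e_n$ but no lattice copy of $c_0$ (a complete infinite-dimensional subspace of $c_{00}$ would contradict the Baire category theorem). You do invoke completeness inside the sketch, so the hypothesis merely needs to be added to the statement.

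The more substantive issue is that the lemma --- which, as you yourself say, is the entire content of the theorem --- is described rather than proved. The extraction of $d_k\ge 0$ with $\rho(d_k)\ge\varepsilon$ and $\tau$-bounded partial sums is fine, but the disjointification that keeps $\rho$ bounded below and the verification that $e_k\mapsto f_k$ extends to a homeomorphic lattice isomorphism are precisely the steps you defer. Note also that the advertised two-sided estimate $c\,\varepsilon'\norm{(a_k)}_\infty\le\rho\bigl(\sum_k a_kf_k\bigr)\le c'\norm{(a_k)}_\infty$ cannot hold literally for a Riesz pseudonorm, which is not homogeneous; what must actually be proved is the sequential statement that $a^{(m)}\to 0$ in $c_0$ if and only if $\sum_k a^{(m)}_kf_k\tc 0$, and the ``only if'' half is exactly where the lower bound on $\rho(f_k)$ is spent. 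As it stands, the proposal is a sound architecture with the load-bearing construction left open.
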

	
Recall that two locally solid vector lattices $(X_1,\tau_1)$ and $(X_2,\tau_2)$ are said to be {\em isomorphic}, if there 
exists a lattice isomorphism from $X_1$ onto $X_2$ that is also a homeomorphism; in other	words, if there exists a mapping from $X_1$ onto $X_2$ that 
preserves the algebraic, the lattice, and the topological structures. A locally solid vector lattice $(X_1,\tau_1)$ is said to be {\em lattice embeddable} into another locally solid vector lattice $(X_2,\tau_2)$ if there exists a sublattice $Y_2$ of $X_2$ such that $(X_1,\tau_1)$ and $(Y_2,\tau_2)$ are isomorphic.	
	
Note that $(X,\tau)$ can have the Lebesgue and Levi properties and simultaneously contains $c_0$ as a sublattice, 
but not as a lattice embeddable copy. The following example illustrates this.

\begin{exam}
		Let $s$ denote the  vector lattice of all sequences in $\mathbb{R}$ with coordinatewise ordering. Clearly, $c_0$ is a sublattice of $s$. Define the following separating family of Riesz pseudonorms 
$$
		\mathcal{R}\coloneqq\{\rho_j:\rho_j((x_n)_{n\in\mathbb{N}})\coloneqq\abs{x_j}\}
$$
for each $j\in\mathbb{N}$ and $(x_n)_n\in s$.
Then $\mathcal{R}$ generates a locally solid topology $\tau$ on $s$. It can be easily shown that $(s,\tau)$ has the Lebesgue and Levi properties. Although $c_0$ is a sublattice of $s$, but $(c_0,\norm{\cdot}_\infty)$ is not lattice embeddable in $(s,\tau)$. To see this, consider the sequence $e_n$ 
of the standard unit vectors in $c_0$. Then the sequence $e_n$ is not norm null in $(c_0,\norm{\cdot}_\infty)$, whereas $e_n\tc 0$ in $(s,\tau)$.
\end{exam}
	
\begin{prop}
Let $(X,\tau)$ be a complete locally solid vector lattice. If every $\tau$-bounded subset of $X$ is sequentially $u\tau$-complete, then $X$ has the Lebesgue and Levi properties.
\end{prop}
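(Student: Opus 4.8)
The plan is to obtain the Lebesgue and Levi properties from Wnuk's characterisation, Theorem~\ref{Wunk1}. Since $X$ is assumed to be $\tau$-complete, it is enough to prove that $c_0$ is \emph{not} lattice embeddable in $(X,\tau)$; Theorem~\ref{Wunk1} then delivers both properties at once. So I would argue by contradiction.

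Assume there is a sublattice $Y\subseteq X$ and a lattice isomorphism $T\colon(c_0,\norm{\cdot}_\infty)\to(Y,\tau|_Y)$ that is simultaneously a homeomorphism. Let $(e_n)$ be the standard basis of $c_0$ and put $y_n:=Te_n$; then $(y_n)$ is a disjoint sequence in $X_+$, and the increasing sequence $s_n':=\sum_{k=1}^n y_k=T\big(\sum_{k=1}^n e_k\big)$ lies in $X_+$. Because $\big\{\sum_{k=1}^n e_k:n\in\mathbb N\big\}$ is norm bounded in $c_0$ and $T$ is a linear homeomorphism (so it maps $\tau|_Y$-bounded, hence $\tau$-bounded, sets to $\tau$-bounded sets), the set $A:=\{s_n':n\in\mathbb N\}$ is $\tau$-bounded in $X$; by hypothesis $A$ is therefore sequentially $u\tau$-complete.

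The heart of the matter — and the step I expect to be the main obstacle — is to verify that $(s_n')$ is a $u\tau$-Cauchy sequence in $X$. By disjointness of the $y_k$, for $n>m$ one has $(s_n'-s_m')\wedge w=\sum_{k=m+1}^n(y_k\wedge w)$ for every $w\in X_+$, so the claim reduces to $\rho_j\big(\sum_{k=m+1}^n(y_k\wedge w)\big)\to0$ as $m,n\to\infty$, for each generating Riesz pseudonorm $\rho_j$ and each $w\in X_+$. Transported along $T$, the statement inside $c_0$ is precisely that the partial sums $\sum_{k=1}^n e_k$ are $un$-Cauchy there, which is immediate since $\sup_{k>m}u_k\to0$ for every $u\in(c_0)_+$; $T$ being a lattice homeomorphism carries this to the unbounded topology of the locally solid vector lattice $(Y,\tau|_Y)$. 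The delicate point is to upgrade this to genuine $u\tau$-Cauchyness \emph{in $X$}, i.e.\ to control $(s_n'-s_m')\wedge w$ for $w$ ranging over all of $X_+$ rather than only over $Y_+$ — in other words to compare the $u\tau$-topology of $X$ relativized to $Y$ with the (a~priori coarser) unbounded topology of $(Y,\tau|_Y)$ that $T$ pulls back from $(c_0,un)$.

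Granting that $(s_n')$ is $u\tau$-Cauchy, sequential $u\tau$-completeness of $A$ forces $s_n'\utc s'$ for some $s'=s_N'\in A$. But then, for every $n>N$, $s_n'-s_N'=\sum_{k=N+1}^n y_k\ge y_{N+1}$, whence $\abs{s_n'-s_N'}\wedge y_{N+1}=(s_n'-s_N')\wedge y_{N+1}=y_{N+1}\ne0$; being a nonzero constant it cannot $\tau$-converge to $0$ (as $\tau$ is Hausdorff), contradicting $s_n'\utc s_N'$. An equivalent way to close the loop, without pinning the limit down inside $A$, is to push the $u\tau$-limit of $(s_n')$ back through $T^{-1}$: it would exhibit a $un$-limit in $c_0$ of $\sum_{k=1}^n e_k$, which cannot exist because these partial sums increase to $\one\notin c_0$. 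Either way we reach a contradiction, so $c_0$ is not lattice embeddable in $(X,\tau)$, and Theorem~\ref{Wunk1} finishes the proof.
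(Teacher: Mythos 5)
Your argument follows the paper's proof essentially step for step: contrapositive via Wnuk's Theorem~\ref{Wunk1}, a lattice-embedded copy of $c_0$, the $\tau$-bounded set of partial sums $s_n'=\sum_{k=1}^n y_k$, and the observation that a $u\tau$-limit lying in $\{s_n':n\in\mathbb N\}$ is impossible. The routine steps you spell out (disjointness of the $y_k$, transfer of topological boundedness along the homeomorphism $T$, the identity $(s_n'-s_m')\wedge w=\sum_{k=m+1}^n(y_k\wedge w)$, the Hausdorff argument at the end) are all correct.

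The step you single out as the delicate point --- upgrading un-Cauchyness of $(s_n)$ inside $c_0$ to $u\tau$-Cauchyness of $(s_n')$ in $X$, i.e.\ controlling $\abs{s_n'-s_m'}\wedge w$ for \emph{all} $w\in X_+$ rather than only $w\in Y_+$ --- is a genuine gap, and you are right not to treat it as routine: the implication is false in general. Take $X=\ell_\infty$ with the norm topology and the standard copy of $c_0$, so $y_k=e_k$; for $w=\one$ one gets $\abs{s_n'-s_m'}\wedge\one=\sum_{k=m+1}^n e_k$, of norm $1$. More generally, the $y_k$ produced by any lattice embedding of $c_0$ into $\ell_\infty$ are norm bounded and bounded away from zero in norm, so choosing $w=M\one\ge y_k$ gives $\abs{s_n'-s_m'}\wedge w=\abs{s_n'-s_m'}$, which is not norm null; hence no such sequence of partial sums is $un$-Cauchy in $\ell_\infty$, and the witness set $\{s_n'\}$ is actually sequentially $un$-complete there (a norm-bounded $un$-Cauchy sequence in a space with a strong unit is norm Cauchy, hence here eventually constant). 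The paper's own proof makes exactly the leap you flagged: it quotes \cite[Lm. 6.1]{KMT} for $un$-Cauchyness \emph{in} $c_0$ and then asserts that $s_n$ is $u\tau$-Cauchy in $X$. So you have located the weak point of the published argument rather than introduced a new one; closing it would require either additional hypotheses ensuring that $\sum_k y_k\wedge w$ is $\tau$-Cauchy for every $w\in X_+$ (essentially the order continuity one is trying to prove) or a different $\tau$-bounded witness set altogether.
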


\begin{proof}
Suppose $X$ does not possess the Lebesgue or Levi properties. 
Then, by Theorem \ref{Wunk1}, $c_0$ is lattice embeddable in $(X,\tau)$.	
Let $s_n=\sum_{k=1}^n e_k$, where $e_k$'s denote the standard unit vectors in $c_0$. 
Clearly, the sequence $s_n$ is norm-bounded in $c_0$ and so it is $\tau$-bounded in $(X,\tau)$. 
Note that $\norm{e_k}_\infty=1\nrightarrow 0$, and so $e_k$ is not $\tau$-null. 
It follows from \cite[Lm. 6.1]{KMT} that $s_n$ is $un$-Cauchy in $c_0$, but is not $un$-convergent in $c_0$. 
That is $s_n$ is $u\tau$-Cauchy which is not $u\tau$-convergent, a contradiction.
\end{proof}
	
Using the proof of the previous result and \cite[Thm. 1$^\prime$]{Wnuk}, one can easily prove the following result.
	
\begin{prop}
Let $X$ be a Dedekind complete vector lattice equipped with a sequentially complete topology $\tau$. 
If every $\tau$-bounded subset of $X$ is sequentially $u\tau$-complete, then $X$ has the $\sigma$-Lebesgue and $\sigma$-Levi properties.
\end{prop}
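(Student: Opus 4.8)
The plan is to rerun the argument of the preceding proposition, with Theorem~\ref{Wunk1} replaced by its sequential analogue \cite[Thm.~1$^\prime$]{Wnuk}: for a Dedekind complete vector lattice carrying a sequentially complete locally solid topology, the $\sigma$-Lebesgue together with the $\sigma$-Levi property is equivalent to $c_0$ not being lattice embeddable in $(X,\tau)$. So I would argue by contraposition. Suppose $X$ lacks the $\sigma$-Lebesgue property or the $\sigma$-Levi property. Since $X$ is Dedekind complete and $\tau$ is sequentially complete, \cite[Thm.~1$^\prime$]{Wnuk} yields a sublattice $Y$ of $X$ together with an isomorphism of locally solid vector lattices $(Y,\tau)\cong(c_0,\norm{\cdot}_\infty)$.

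Next I would import the standard obstruction. Let $e_k\in Y$ correspond to the $k$-th standard unit vector of $c_0$, and put $s_n\coloneqq\sum_{k=1}^n e_k$. Then $\norm{s_n}_\infty=1$ for all $n$, so $\{s_n:n\in\mathbb N\}$ is norm-bounded in $c_0$ and hence $\tau$-bounded in $X$, while $\norm{e_k}_\infty=1\nrightarrow 0$, so $e_k$ is not $\tau$-null. By \cite[Lm.~6.1]{KMT}, $s_n$ is $un$-Cauchy in $c_0$ but not $un$-convergent in $c_0$; transporting this through the isomorphism, $s_n$ is $u\tau$-Cauchy in $X$, while for $n>m$ one has $\abs{s_n-s_m}\wedge e_{m+1}=e_{m+1}\neq 0$, so $\abs{s_n-s_m}\wedge e_{m+1}$ does not $\tau$-converge to $0$, and therefore $s_n$ does not $u\tau$-converge to any $s_m$. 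Thus the $\tau$-bounded set $\{s_n:n\in\mathbb N\}$ contains a $u\tau$-Cauchy sequence with no $u\tau$-limit inside it, hence is not sequentially $u\tau$-complete, contradicting the hypothesis. This gives the $\sigma$-Lebesgue and $\sigma$-Levi properties of $X$.

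The main obstacle is the one already glossed over in the previous proof: a lattice embedding of $c_0$ into $(X,\tau)$ need not have the property that the $u\tau$-topology of the embedded copy coincides with the topology induced on it from $(X,u\tau)$ (this is exactly the theme of Section~3), so some care is needed to upgrade ``$un$-Cauchy in $c_0$'' to ``$u\tau$-Cauchy in $X$''. One resolves this either by arranging (via Theorem~\ref{Um convergence in sublattices}) that the embedded copy is majorizing, $\tau$-dense in the ideal it generates, or a projection band, or by reproving the $u\tau$-Cauchy estimate directly from disjointness of the $e_k$ and Dedekind completeness of $X$. Everything else --- the precise statement of \cite[Thm.~1$^\prime$]{Wnuk} and the fact that its hypotheses are exactly Dedekind completeness of $X$ and sequential completeness of $\tau$ --- is routine.
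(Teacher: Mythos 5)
Your proposal is correct and is essentially the paper's own proof: the paper merely remarks that the result follows by ``using the proof of the previous result and \cite[Thm.~1$^\prime$]{Wnuk},'' which is precisely the substitution you carry out, down to the sequence $s_n=\sum_{k=1}^n e_k$ and the appeal to \cite[Lm.~6.1]{KMT}. Your closing observation --- that upgrading ``$un$-Cauchy in the embedded copy of $c_0$'' to ``$u\tau$-Cauchy in $X$'' requires an extra argument (e.g.\ via Theorem~\ref{Um convergence in sublattices} or directly from disjointness of the $e_k$) --- identifies a point the paper glosses over in both propositions, and your explicit check that $\abs{s_n-s_m}\wedge e_{m+1}=e_{m+1}\neq 0$ rules out limits inside the bounded set, which is what sequential $u\tau$-completeness of that set demands.
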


Clearly, every finite dimensional locally solid vector lattice $(X,\tau)$ is $u\tau$-complete.
On the contrary of \cite[Prop. 6.2]{KMT}, we provide an example of a $\tau$-complete locally solid vector lattice $(X,\tau)$ possessing 
the Lebesgue property such that it is $u\tau$-complete and $\dim X=\infty$.
	
\begin{exam}
Let $X=s$ and $\mathcal{R}=(\rho_j)_{j\in\mathbb{N}}$ such that $\rho_j((x_n))\coloneqq\abs{x_j}$, where $(x_n)_{n \in \mathbb{N}}\in s$. It is easy to see that $(X,\mathcal{R})$ is $\tau$-complete and has the Lebesgue property. Now, we show that  $(X,\mathcal{R})$ is $u\tau$-complete.
Suppose $x^\alpha$ is $u\tau$-Cauchy net. Then, for each $u\in X_+$, we have $\vert x^\alpha-x^\beta\vert\wedge u\tc 0$. Now, $u=u_n$ and, $x^\alpha=x_n^\alpha$. 
Let $j\in\mathbb{N}$, then $\rho_j(\vert x^\alpha-x^\beta\vert\wedge u )\rightarrow 0$ in $\mathbb{R}$ over $\alpha,\beta$
iff $\vert x_j^\alpha-x_j^\beta\vert\wedge u_j\rightarrow 0$ in $\mathbb{R}$ iff $\vert x_j^\alpha-x_j^\beta\vert\rightarrow 0$ in $\mathbb{R}$ over $\alpha,\beta$.\\
Thus, $(x_j^\alpha)_\alpha$ is Cauchy in $\mathbb{R}$ and so there is $x_j\in\mathbb{R}$ such that $x_j^\alpha\rightarrow x_j$ in $\mathbb{R}$ over $\alpha$.
Let $x=(x_j)_{j\in\mathbb{N}}\in s$, then, clearly, $x^\alpha\utc x$.
\end{exam}

\end{document}